\theoremstyle{plain}
\newtheorem{proposition}{Proposition}[section]
\newtheorem{theorem}[proposition]{Theorem}
\newtheorem{corollary}[proposition]{Corollary}
\newtheorem{lemma}[proposition]{Lemma}
\theoremstyle{definition}
\newtheorem{definition}[proposition]{Definition}
\newtheorem{example}[proposition]{Example}
\newtheorem{remark}[proposition]{Remark}
\newcommand{\Sn}{\mathfrak{S}_n}
\newcommand{\C}{\mathbb{C}}
\newcommand{\Reg}[2]{\textup{Reg}_{#1}(#2)}
\newcommand{\bg}[2]{\textup{BG}_{#1}^{#2}}
\newcommand\ten{10}
\newcommand\eleven{11}
\newcommand\twelve{12}
\newcommand{\equis}{\text{$\times$}}
\newcommand\fg{\Yfillcolour{black!20}}
\newcommand\fgg{\Yfillcolour{black!45}}
\newcommand\fggg{\Yfillcolour{black!75}}
\newcommand\fw{\Yfillcolour{white}}
\newcommand{\lt}{\Ylinethick{1.5pt}}
\newcommand{\lno}{\Ylinethick{0.4pt}}
\newcommand{\bgs}[1]{\textup{bg}_{#1}}
\author{Ana Bernal}
\address{Laboratoire de Math\'ematiques de Reims UMR 9008, Université de Reims Champagne Ardenne, Moulin de la Housse BP 1039, 51687 REIMS cedex 2, France}
\email{ana.bernal@univ-reims.fr}
\title{On self-Mullineux and self-conjugate partitions}
\begin{document}

\begin{abstract}
The Mullineux involution is a relevant map that appears in the study of the modular representations of the symmetric group and the alternating group. The fixed points of this map  are certain partitions of particular interest. It is known that the cardinality of the set of these \emph{self-Mullineux} partitions is equal to the cardinality of a distinguished subset of self-conjugate partitions. In this work, we give an explicit bijection between the two families of partitions in terms of the Mullineux symbol. \\
\end{abstract}

\maketitle

\section{Introduction}
Let $n$ be a non negative integer. It is well known that the isomorphism classes of complex irreducible representations of the symmetric group $\Sn$ is indexed by the set of partitions of $n$. Let $\lambda$ be a partition of $n\geq 2$ (written $\lambda \vdash n$) and $S^\lambda$ the associated irreducible $\C \Sn$-module. Tensoring $S^\lambda$ with the sign representation $\varepsilon$ of $\Sn$ results in the irreducible representation $S^{\lambda'}$ of $\Sn$, where $\lambda'$ is the conjugate partiton of $\lambda$ (\cite[2.1.8]{jameskerber}). This procedure allows to understand, by Clifford theory, all complex irreducible representations of the alternating group $A_n$. Indeed, let $\lambda$ be a partition of $n\geq 2$,

\begin{itemize}
\item If $\lambda \neq \lambda'$ then $S^\lambda \downarrow_{A_n} \simeq S^{\lambda'} \downarrow_{A_n}$ is irreducible.
\item If $\lambda = \lambda'$ then $S^\lambda \downarrow_{A_n}$ splits into two irreducible, non-isomorphic $\C A_n$-modules $S^\lambda_+$ and $S^\lambda_-$
\[
S^\lambda \downarrow_{A_n} \simeq S^\lambda_+ \oplus S^\lambda_-,
\]
\end{itemize}
and 
\[ 
\{ S^\lambda \downarrow_{A_n} \mid \lambda \vdash n\ \text{and}\  \lambda \neq \lambda' \}\  \sqcup \ \{S^\lambda_+,S^\lambda_- \mid \lambda \vdash n\ \text{and}\   \lambda = \lambda'\}
\]
is a complete set of non-isomorphic irreducible $\C A_n$-modules, considering only one partition $\lambda$ for each couple $\{\lambda,\lambda'\}$ with $\lambda \neq \lambda'$ (\cite[2.5.7]{jameskerber}).

A natural question is then what happens when we change the characteristic of the field. Let $p$ be an odd prime and $F$ an algebraically closed field of characteristic $p$. It is well known that the number of isomorphism classes of irreducible representations of the symmetric group $\Sn$ over $F$, is equal to the number of conjugacy classes of \emph{$p$-regular elements} of $\Sn$ (\cite[15.11]{isaacs}), which in turn is in bijection with the \emph{$p$-regular partitions} of $n$ (\cite[6.1.2]{jameskerber}). Also in this setting, understanding the tensor product with the sign representation allows to obtain a classification of irreducible $F A_n$-modules. However, the conjugate of a $p$-regular partition is not necessarily $p$-regular, so tensoring with the sign representation in this case does not amount to conjugating the corresponding $p$-regular partition.

In \cite{mulli}, G. Mullineux defined a bijection $m$ on the set of $p$-regular partitions of $n$, which is an involution, and conjectured that for a $p$-regular partition $\lambda$ with associated irreducible $F \Sn$-module $D^\lambda$ we have
\[
D^\lambda \otimes \varepsilon = D^{m(\lambda)}.
\]

Later, in \cite{kleshchev}, A. Kleshchev described a different algorithm to compute $m(\lambda)$ and in \cite{fordkleschev}, B. Ford and A. Kleshchev proved Mullineux conjecture to be true. Mullineux conjecture was also proven to be true in \cite{bessenrodtolsson2} by C. Bessenrodt and J.B. Olsson by using yet another description of the Mullineux bijection $m$. Other properties of this map have been studied for example in \cite{mullineux2}, \cite{bessenrodtolsson}, \cite{bessenrodtolssonxu}.
Hence, tensoring with the sign representation in the modular case amounts to applying $m$ on partitions, which makes the Mullineux map a $p$-analogue of conjugation of partitions. This way we have a classification of irreducible representations of $A_n$ in characteristic $p$ as follows. Let $\lambda$ be a $p$-regular partition of $n\geq 2$,

\begin{itemize}
\item If $\lambda \neq m(\lambda)$ then $D^\lambda \downarrow_{A_n} \simeq D^{m(\lambda)} \downarrow_{A_n}$ is irreducible.
\item If $\lambda = m(\lambda)$ then $D^\lambda \downarrow_{A_n} $ splits into two irreducible, non-isomorphic $F A_n$-modules $D^\lambda_+$ and $D^\lambda_-$
\[
D^\lambda \downarrow_{A_n} \ \simeq\  D^\lambda_+ \oplus D^\lambda_-,
\]
\end{itemize}
and 
\begin{equation}\label{eqn:indxmodA}
\{ D^\lambda \downarrow_{A_n} \mid \lambda \vdash n,\ \text{$\lambda$ $p$-regular and}\   \lambda \neq m(\lambda) \}\  \sqcup \ \{D^\lambda_+,D^\lambda_- \mid \lambda \vdash n,\ \text{$\lambda$ $p$-regular and}\   \lambda = m(\lambda)\}
\end{equation} 

is a complete set of non-isomorphic irreducible $F A_n$-modules, considering only one partition $\lambda$ for each couple $\{\lambda,m(\lambda)\}$ with $\lambda \neq m(\lambda)$ (\cite[2.1]{ford}). Following such an indexing of irreducible modular representations of $A_n$, it is natural to inquire about the set of $p$-regular partitions such that $\lambda=m(\lambda)$. The definition of the Mullineux map $m$ is quite complicated combinatorially, as are the different descriptions mentioned above, even if they are explicit. Therefore describing its fixed points is not easy.  So that, in characteristic $p$, it is not straightforward to obtain a reasonably simple indexing set for the irreducible $F A_n$-modules.

In fact, the number of fixed points of the Mullineux map, or \emph{self-Mullineux} partitions, is equal to the number of partitions of $n$ with different odd parts, none of them divisible by $p$ (\cite[Proposition 2]{andrewsolsson}). This number is, in turn, equal to the number of self-conjugate partitions with diagonal hook-length not divisible by $p$ (\cite[2.5.11]{jameskerber}). We refer to the latter as \emph{BG-partitions} (see Definition \ref{def:bgpart} for details). There is an elementary algebraic argument to see this (Appendix \ref{appendix}). Thus, it is natural to ask for an explicit bijection between the self-Mullineux partitions and the BG-partitions. 

The Mullineux map can be defined in terms of a symbol called the Mullineux symbol, defined on $p$-regular partitions. In this work we introduce a new symbol, defined on self-conjugate partitions. From such a symbol, associated to a BG-partition, we describe how to reconstruct a BG-partition and a self-Mullineux partition, and this algorithm provides our bijection.

 A further motivation for finding an explicit bijection can also be given in the context of the representation theory of the symmetric group and of the alternating group. In \cite{brunatgramain}, O. Brunat and J.-B. Gramain have shown the existence of a $p$-basic set for the symmetric group, which, by restriction, gives a $p$-basic set for the alternating group. However, this set, which provides a natural indexing set for the modular irreducible representations is not explicit and it would be ideal to give a complete description of it. One thing we know about such a set is that it always contains the set of BG-partitions. Hence, it is convenient to have a better understanding of them. More generally, this work can be seen as a first step to give a new natural way to label the modular irreducible representations of the symmetric group, for which tensoring with the sign representation is easier to describe combinatorially. We hope to come back to this problem later. 

A bijection between the set of self-Mullineux partitions of $n$ and partitions of $n$ with different odd parts, none of them divisible by $p$ can alternatively be derived from a bijection between two more general sets defined by C. Bessenrodt in \cite{bessenrodt}. However, the two approaches are quite different because our bijection is defined directly between the sets of our interest. Moreover, we obtain a different bijection (see Remark \ref{rem:bess}).

The paper is organized as follows. In Section \ref{secmulli} we recall some definitions about partitions and the definition of the Mullineux map. Section \ref{subsec:bg} contains the main result of this paper; we define a symbol on self-conjugate partitions and we show how through this symbol we obtain the mentioned explicit bijection.
Finally, in Section \ref{sect:surj} we prove that the inverse procedure of reconstructing a unique BG-partition from a self-Mullineux partition is well defined, which confirms that this is a one to one correspondence without the need of knowing beforehand that there exists a bijection between both sets of partition.

\medskip
\noindent \textbf{Acknowledgements.}  The author wishes to thank her advisors Nicolas Jacon and Lo\"ic Poulain d'Andecy for their precious advice, helpful discussions and careful reading. The author would also like to thank an anonymous referee for useful suggestions.

\section{Preliminaries: the Mullineux map} \label{secmulli}
In this section we recall some general definitions about partitions and the definition of the Mullineux map, as defined by G. Mullineux in \cite{mulli}. We follow closely definitions in \cite{fordkleschev}.

A \emph{partition} $\lambda$ is a sequence $\lambda = (\lambda_1,\lambda_2,\ldots,\lambda_k,\ldots)$ of non-negative integers such that $\lambda_1 \geq\lambda_2 \geq\ \cdots \geq \lambda_k \geq \cdots $, containing only finitely many non-zero terms. Let $n \in \mathbb{N}$ be such that $|\lambda| =\sum \lambda_i = n$. We say that $\lambda$ is a partition of $n$, which we write $\lambda \vdash n$. We note $k(\lambda)=\textup{max}\{i \mid \lambda_i \geq i\}$. Let $\textup{Par}(n)$ denote the set of partitions of $n$. We call $|\lambda|$ the \emph{size} of $\lambda$. The integers $\lambda_i$ are called the \emph{parts} of the partition $\lambda$. The number of non-zero parts is the \emph{length} of $\lambda$ and is denoted $l(\lambda)$.
The \emph{Young diagram} of a partition $\lambda$ is the set \[
[\lambda]=\{(i,j) \in \mathbb{N} \times \mathbb{N} \mid i\geq 1  \ \ \text{and} \ \ 1 \leq j \leq \lambda_i \},
\] 
whose elements are called \emph{nodes}. We represent the diagram as an array of boxes in the plane with the convention that $i$ increases downwards and $j$ increases from left to right. A partition is often identified with its Young diagram. The Young diagram of $\lambda=(5,2^2,1)$ is 
\[
[\lambda]=\yngs(1,5,2,2,1)
\]
The \emph{conjugate} (or \emph{transpose}) partition of $\lambda=(\lambda_1,\ldots,\lambda_k)$ is the partition $\lambda'$ of $n$ defined as $\lambda'_j=\#\{i \mid \lambda_i \leq j\}$, which amounts to transposing the Young diagram $[\lambda]$ with respect to its main diagonal which consists of the nodes of the form $(i,i)$ with $1\leq i\leq k(\lambda)$. If $\lambda=(5,2^2,1)$, as above, then $\lambda'=(4,3,1^3)$ and its Young diagram is 
\[
[\lambda']=\yngs(1,4,3,1^3)
\]
If $\lambda=\lambda'$ we say that $\lambda$ is \emph{self-conjugate}. For a positive integer $p$, $\lambda$ is said to be \emph{$p$-regular} if it does not contain $p$ parts $\lambda_i \neq 0$ which are equal. The partition $\lambda=(5,2^2,1)$ above is not $2$-regular but it is $3$-regular. We denote by $\Reg{p}{n}$ the set of $p$-regular partitions of $n$. 

Let $(i,j)$ be a node of a partition $\lambda$. We define the $(i,j)$\emph{-th hook} of $\lambda$, or the $(i,j)_\lambda$-th hook, as the set of nodes in $[\lambda]$ to the right or below the node $(i,j)$, that is, the nodes $(k,l)$ such that $k=i$ and $j \leq l \leq \lambda_i$, or $l=j$ and $i \leq k \leq \lambda'_j$. The \emph{hook-length} of $\lambda$ at $(i,j) \in [\lambda]$, denoted here $h^\lambda_{ij}$ is the number of nodes in the $(i,j)_\lambda$-th hook, that is
\[
h^\lambda_{ij}=\lambda_i+\lambda'_j-i-j+1.
\]
We omit $\lambda$ from the notation when there is no ambiguity. A partition which is equal to its $(1,1)$-th hook is called a \emph{hook.}

\begin{example} Let $\lambda=(7,3,2,1)$. The $(1,2)$-th hook of $\lambda$ is represented by shaded boxes in the following diagram
\[
\gyoung(;!\fg;;;;;;,!\fw;!\fg;!\fw;,;!\fg;,!\fw;)
\]
and its length is $h_{1,2}=8$. 
\end{example}
\medskip
For any positive integer $m$, a \emph{m-hook} (respectively \emph{$(m)$-hook}) is a hook of length $m$ (respectively divisible by $m$). We call a node $(i,i)$, for $1\leq i \leq k(\lambda)$, a \emph{diagonal} node and the set of diagonal nodes is the \emph{diagonal} of $\lambda$. A $(i,i)$-th hook is referred to as a \emph{diagonal hook}. 

The \emph{rim} of $\lambda$ is the set of nodes $(i,j) \in [\lambda]$ such that $(i+1,j+1) \notin [\lambda]$, in other words, it is the south-east border of the Young diagram $[\lambda]$. For example, the rim of $\lambda=(9,6,3,1)$ is represented in the following diagram by shaded boxes

\[
\gyoung(;;;;;!\fg;;;;,!\fw;;!\fg;;;;,;;;,;)
\]

Let us label the nodes of the rim with positive integers from the top right to the bottom left, as shown in the following figure

\[
\gyoung(;;;;;4321,;;;765,\ten98,\eleven)
\]

Let $p$ be an odd prime. The first \emph{p-segment} of the rim consists of  the nodes corresponding to integers less or equal than $p$. If the last node $(i,j)$ of the first $p$-segment is in the last row of $[\lambda]$, then $[\lambda]$ only has one $p$-segment. If not, let $l$ be the smallest label on row $i+1$. The second $p$-segment of the rim consists of the nodes labelled by $l \leq m \leq l+p-1$. Repeating this procedure we will eventually reach the bottom row of the diagram and it is clear that all $p$-segments have $p$ nodes, except possibly the last one. The \emph{p-rim} of $\lambda$ is defined as the union of all the $p$-segments.

\begin{example} The following two diagrams illustrate the $p$-rim of $\lambda=(9,6,3,1)$ for $p=3$ and $p=5$.

\[
p=3 \ \ \gyoung(;;;;;;!\fg;;;,!\fw;;;!\fg;;;,;;;,;) \ \ \ \ \ \ p=5 \ \ \gyoung(;;;;;!\fg;;;;,!\fw;;;;;!\fg;,;;;,;)
\]
\end{example}
\medskip

 We denote $a_\lambda$ the number of nodes in the $p$-rim of $\lambda$. Define diagrams $\lambda^{(0)},\lambda^{(1)},\ldots,\lambda^{(l)}$ as follows. Put $\lambda^{(0)}=\lambda$ and for $i\geq1$ put
\[
\lambda^{(i)}= \lambda^{(i-1)} \setminus \{p\text{-rim of }\ \lambda^{(i-1)}\},
\]
where we choose $l$ maximal with respect to $\lambda^{(l)} \neq \emptyset$; so $\lambda^{(l+1)} = \emptyset$. We call the $p$-rim of $\lambda^{(i)}$ the $i$-th $p$-rim of $\lambda$. Let $a_i=a_{\lambda^{(i)}}$ be the number of nodes of the $i$-th $p$-rim of $\lambda$ and $r_i$ the number of rows of $\lambda^{(i)}$, that is, $r_i=l(\lambda^{(i)})$. The \emph{Mullineux symbol} of $\lambda$ (introduced in \cite{bessenrodtolsson}) is
\begin{equation}\label{mullisymb}
G_p(\lambda)=
\begin{pmatrix}
a_0 & a_1 & \cdots & a_l \\
r_0 & r_1 & \cdots & r_l
\end{pmatrix}.
\end{equation}

\begin{example}\label{ex-labels}
Let $p=5$ and $\lambda=(9,6,3,1)$. In the following diagram we represent the $i$-th $p$-rim of $\lambda$ with label $i$ on its nodes
\Yvcentermath1
\[
\young(222210000,211110,000,0) \ \ \ \ \  G_5(\lambda)= \begin{pmatrix}
9 & 5 & 5 \\
4 & 2 & 2
\end{pmatrix}.
\]

\end{example}
\medskip
\noindent The following proposition is a reformulation (\cite[\textsection 5]{andrewsolsson}) of a result proved in \cite[3.6]{mulli}.

\begin{proposition}\label{cinco} Let $p$ be an odd prime and $\lambda$ a $p$-regular partition of a non-negative integer $n$. Set  
\[ \varepsilon_i=
\begin{cases}
0\ & \text{if} \ p \mid a_i,\\
1\ & \text{if} \ p \nmid a_i.
\end{cases}
\]
The entries of $G_p(\lambda)$ satisfy
\begin{enumerate}
\item $\varepsilon_i \leq r_i-r_{i+1} < p+\varepsilon_i $ for $0 \leq i < l$,
\item $1 \leq r_l < p+\varepsilon_l$,
\item \label{cinco3} $r_i-r_{i+1}+\varepsilon_{i+1} \leq a_i-a_{i+1} < p+ r_i-r_{i+1}+\varepsilon_{i+1}$ for $0 \leq i < l$,
\item $r_l \leq a_l < p+r_l$,
\item $\sum_{i=0}^l a_i =n$.
\end{enumerate}
Moreover, if $a_0,\ldots,a_l,r_0,\ldots,r_l$ are positive integers such that these inequalities are satisfied then there exists exactly one $p$-regular partition $\lambda$ of $n$ such that
\[
G_p(\lambda)=
\begin{pmatrix}
a_0 & a_1 & \cdots & a_l \\
r_0 & r_1 & \cdots & r_l
\end{pmatrix}.
\]
\end{proposition}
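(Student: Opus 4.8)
The plan is to reduce everything to the analysis of a single application of the $p$-rim operation $\mu \mapsto \mu \setminus \{p\text{-rim of }\mu\}$ and then to iterate. Write the $p$-rim of a $p$-regular partition $\mu$ as the disjoint union of its segments $S_1,\dots,S_k$, where $|S_j|=p$ for $j<k$ and the last segment has length $|S_k|=a_\mu-(k-1)p=:s$ with $1\le s\le p$; thus $\varepsilon=0$ exactly when $s=p$, i.e. when $p\mid a_\mu$. Property (5) is then immediate: the chain $\lambda^{(0)}\supsetneq\lambda^{(1)}\supsetneq\cdots\supsetneq\lambda^{(l)}$ decomposes $[\lambda]$ into the disjoint $i$-th $p$-rims, so $\sum_i a_i=|\lambda|=n$. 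Everything else is local, comparing $\mu=\lambda^{(i)}$ (with $r_i$ rows and $p$-rim of size $a_i$) to $\nu=\lambda^{(i+1)}$ (with $r_{i+1}$ rows).

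For the forward inequalities I would first record the geometry of the $p$-rim inside the rows. The intersection of the $p$-rim with a row $t$ is a contiguous right-hand segment $\{(t,c_t),\dots,(t,\mu_t)\}$, and row $t$ is deleted by the operation precisely when $c_t=1$; hence $r_i-r_{i+1}$ is the number of completely deleted rows. Tracking how the segments enter and leave successive rows — a segment is a block of $p$ consecutive rim nodes, entering each new row at its right-most rim node and traversing its whole rim before dropping to the next row — one sees that the deleted rows are governed by the last segment. If $s<p$ (so $\varepsilon=1$) the last segment exhausts \emph{all} remaining rim nodes and therefore reaches the corner $(r_i,1)$, forcing the bottom row to be consumed; this gives the lower bound $\varepsilon\le r_i-r_{i+1}$. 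For the upper bound, the rows that \emph{can} be deleted are those $t$ with $\mu_{t+1}\le 1$, of which there are at most $p$ because $p$-regularity forbids $p$ equal parts; a finer count of the segments covering this bottom block sharpens this to $r_i-r_{i+1}<p+\varepsilon_i$, which is (1), and (2) is its boundary version at the innermost step. Applying the same incidence count to the difference of the sizes of the two consecutive $p$-rims yields (3), with (4) again the boundary version.

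For the converse (existence and uniqueness) I would reconstruct $\lambda$ from the inside out. Since $\lambda^{(l+1)}=\emptyset$, the diagram $\lambda^{(l)}$ equals its own $p$-rim; as $p$-rim $\subseteq$ rim $\subseteq[\lambda^{(l)}]$, this forces rim $=[\lambda^{(l)}]$, so $\lambda^{(l)}$ is a hook, and the unique hook with $r_l$ rows and $a_l$ nodes is $(a_l-r_l+1,1^{r_l-1})$. Here $a_l\ge r_l$ makes it a genuine hook and $a_l<p+r_l$ is exactly the condition that this hook be its own $p$-rim (no node is skipped), so (4) pins it down, while (2) is exactly the condition that the hook be $p$-regular. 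Proceeding upward, given $\lambda^{(i+1)}$ together with $(a_i,r_i)$ satisfying (1) and (3), one shows there is a unique $p$-regular $\mu$ with $r_i$ rows whose $p$-rim has $a_i$ nodes and with $\mu\setminus\{p\text{-rim of }\mu\}=\lambda^{(i+1)}$: the number $r_i-r_{i+1}$ of rows to be adjoined at the bottom is fixed, and the $p$-rim to be glued back is then forced segment by segment, the inequalities being precisely the constraints that make each reattachment legal, keep $\mu$ $p$-regular, and leave no freedom. Iterating produces the unique preimage $\lambda=\lambda^{(0)}$, of size $n$ by (5).

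The hardest part will be the bookkeeping at the boundaries between consecutive segments. Because the $p$-rim skips the nodes lying to the left of the point where one segment drops into the next row, neither the count of deleted rows nor the change $a_i-a_{i+1}$ splits cleanly over the segments; controlling these skips — in particular sharpening the upper bound in (1) and (3) when $\varepsilon=0$, and proving that the reverse $p$-rim addition is genuinely forced and cannot destroy $p$-regularity — is where the real work lies. An alternative to this self-contained route is to transcribe Mullineux's original inequalities from \cite[3.6]{mulli} into the present normalization, following the reformulation in \cite[\S5]{andrewsolsson}; the combinatorial content is the same.
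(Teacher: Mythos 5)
The first thing to say is that the paper does not prove Proposition~\ref{cinco} at all: it is stated as a reformulation (following \cite[\S 5]{andrewsolsson}) of \cite[3.6]{mulli}, and the proof is delegated entirely to those sources. So there is no in-paper argument to measure your sketch against, and your closing ``alternative'' of transcribing Mullineux's inequalities is in fact exactly what the paper does. Judged as a strategy, your outline --- analyse a single application of $\mu\mapsto\mu\setminus\{p\text{-rim of }\mu\}$, iterate, and reconstruct from the inside out --- is the classical (and correct) route, and your treatment of (5), of the lower bound in (1), and of the base case via (2) and (4) is essentially sound.

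Judged as a proof, however, the steps you defer as ``bookkeeping'' are precisely where the content lives, and one of them is not merely unfinished but wrongly described. (a)~Your count of deletable rows (those $t$ with $\mu_{t+1}\le 1$, at most $p$ of them by $p$-regularity) gives only $r_i-r_{i+1}\le p$; the strict bound when $\varepsilon_i=0$ requires the further argument that if $p$ rows were deleted while every segment has exactly $p$ nodes, the last segment would have to be $(r-p+1,1),\dots,(r,1)$ and to begin at the rightmost rim node of row $r-p+1$, forcing $\mu_{r-p+1}=\dots=\mu_r=1$ and contradicting $p$-regularity. This is absent. (b)~Inequality (3) does \emph{not} follow from ``the same incidence count'': $a_i-a_{i+1}$ compares the $p$-rims of two different partitions, and the row-by-row comparison fails outright --- for $p=3$ and $\mu=(3,3)$ the $p$-rim of $\mu$ has one node in row $1$ while the $p$-rim of $\mu^{(1)}=(2,1)$ has two --- so (3) needs a genuinely separate argument tracking the segment boundaries and skipped nodes of both layers simultaneously. (c)~``The $p$-rim to be glued back is then forced segment by segment'' is an assertion of the entire uniqueness claim, not a proof of it; the reattachment algorithm and its legality are the content of Remark~\ref{RemConst} and \cite[\S 1]{fordkleschev}. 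In short: the plan is viable, but at points (a)--(c) the proposal names the difficulty rather than resolving it, and at (b) the proposed mechanism would not work as stated.
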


\begin{remark}\label{RemConst} It is easy to recover the $p$-regular partition $\lambda$ from its Mullineux symbol $G_p(\lambda)$; start with the hook $\lambda^{(l)}$ of size $a_l$ and length $r_l$, and for $i=l-1,l-2,\ldots ,0$, add the $i$-th $p$-rim (consisting of $a_i$ nodes) to $\lambda^{(i+1)}$ from the bottom to the top, starting by placing a node on the first free placement in row $r_i$. Then, adding nodes either on top (whenever it is possible) or to the right of the last added node until having added the last node of the $p$-segment and add the following $p$-segment starting on the first free placement of the row on top of the last added node. This procedure finishes at the first row. This algorithm is more precisely described in \cite[\textsection 1]{fordkleschev}.
\end{remark}
\medskip
Let $\lambda$ be a $p$-regular partition of $n$, with Mullineux symbol (\ref{mullisymb}) and let $\varepsilon_i$ be as in Proposition \ref{cinco}. Define $s_i=a_i+\varepsilon_i-r_i$. In \cite[4.1]{mulli} it is shown that the array
\[
\begin{pmatrix}
a_0 & a_1 & \cdots & a_l \\
s_0 & s_1 & \cdots & s_l
\end{pmatrix},
\]
corresponds to the Mullineux symbol of a $p$-regular partition. We are now able to define the Mullineux map $m$.

\begin{definition} With the above notations, $m(\lambda)$ is defined as the unique $p$-regular partition such that
\[ G_p(m(\lambda))=
\begin{pmatrix}
a_0 & a_1 & \cdots & a_l \\
s_0 & s_1 & \cdots & s_l
\end{pmatrix}.
\]
\end{definition}

Because of Proposition \ref{cinco}, $m(\lambda)$ is well defined, and from the definition we can see that $m$ is an involution.

\begin{remark} If $p>n$, then $\Reg{p}{n}=\textup{Par}(n)$ and irreducible $F \Sn$-modules are therefore indexed by all partitions of $n$. In this case, the Mullineux map coincides with conjugation: $m(\lambda)=\lambda'$.
\end{remark}

\section{From BG-partitions to self-Mullineux partitions} \label{subsec:bg}
Let $p$ be an odd prime. Let us define the set of BG-partitions.
\begin{definition} \label{def:bgpart}
We call \emph{BG-partition} any self-conjugate partition with no diagonal $(p)$-hooks, that is, any partition $\lambda$ such that $p \nmid h^\lambda_{ii}$ for every $1 \leq i \leq k(\lambda)$ with $(i,i) \in [\lambda]$. We denote $\bg{p}{}$ the set of BG-partitions and $\bg{p}{n}$ the set of BG-partitions of a non-negative integer $n$.
\end{definition}

As said in the introduction, the aim of this work is to give an explicit bijection between BG-partitions and self-Mullineux partitions. In this section we describe such bijection: we define the \emph{BG-symbol} associated to a BG-partition, which is a two-row array of positive integers. We prove that BG-symbols of BG-partitions are Mullineux symbols of self-Mullineux partitions and that this association is injective, resulting in a bijection.

\subsection{BG-symbol} 

We introduce a symbol, defined in general for self-conjugate partitions. This symbol is in some way inspired by the Mullineux symbol. In a similar way as the Mullineux symbol, which is defined by counting nodes on the $p$-rims of a sequence of partitions, the BG-symbol is defined by counting elements in a set of nodes called the $p$-rim* which is a \emph{symmetric} analogue of the $p$-rim.

Let $\lambda$ be a self-conjugate partition and denote by $\text{Rim}_p(\lambda) \subseteq [\lambda]$ the set of nodes in the $p$-rim of $[\lambda]$. Set
\[
U_\lambda=\{(i,j) \in \text{Rim}_p(\lambda) \mid i \leq j\},
\]
that is, $U_\lambda$ consists of the nodes of the $p$-rim which are above (or on) the diagonal of $[\lambda]$. We denote $r_\lambda^*:=\#\, U_\lambda$. Set
\[
L_\lambda=\{(j,i) \mid (i,j)\in U_\lambda\}.
\]
The set $L_\lambda$ consists of the nodes in $U_\lambda$ reflected across the diagonal of $\lambda$. Notice that $L_\lambda \subseteq [\lambda]$, since $\lambda=\lambda'$, so that $(i,j)\in[\lambda]$ if and only if $(j,i)\in[\lambda]$.

\begin{definition}\label{defrim*}
Let $\lambda$ be a self-conjugate partition. The \emph{p-rim*} of $\lambda$ is the set $
\textup{Rim}^*_p(\lambda)=U_\lambda \cup L_\lambda.$
We denote $a^*_\lambda$ the number of nodes in the $p$-rim* of $\lambda$, that is, $a^*_\lambda = \# \, \textup{Rim}^*_p(\lambda)$.
Define $\varepsilon^*_\lambda$ as $\varepsilon^*_\lambda = a^*_\lambda\ \textup{mod}\ 2
$. It is the parity of the number of nodes on the $p$-rim* of $\lambda$.
\end{definition}

\begin{example} The following two diagrams illustrate the $p$-rim* of $\lambda=(6,2,1^4)$ in shaded boxes, for $p=3$ and $p=5$.

\[
p=3 \ \ \gyoung(;;;!\fg;;;,!\fw;!\fg;,!\fw;!\fg,;,;,;) \ \ \ \ \ \ p=5 \ \ \gyoung(;!\fg;;;;;,;;,;,;,;,;)
\]
\end{example}
\bigskip

\begin{remark} \label{parite2}
For a self-conjugate partition $\lambda$, from the definition of $p$-rim* we have
\[
\begin{array}{ccl}
\varepsilon^*_\lambda = 0  & \Leftrightarrow & a^*_\lambda \ \text{is even.} \\
& \Leftrightarrow & \textup{Rim}^*_p(\lambda)\  \text{has no diagonal nodes.}
\end{array}
\]
\end{remark}

This way, the number of nodes in $U_\lambda$, that is, the number of nodes of the $p$-rim* of $\lambda$ over (or on) the diagonal is
\[
r_\lambda^* = 
\begin{cases} 
\frac{a^*_\lambda}{2} & \text{if $a^*_\lambda$ is even,}\\
\frac{a^*_\lambda+1}{2} &  \text{otherwise,}
\end{cases}
\]
thus
\[
r_\lambda^* = \frac{a^*_\lambda + \varepsilon^*_\lambda}{2}.
\]

Let $\lambda$ be a self-conjugate partition. We define diagrams $\lambda^{(0)*},\lambda^{(1)*},\ldots,\lambda^{(l)*}$ in an analogue way as for the Mullineux symbol, by considering the $p$-rim* instead of the $p$-rim. Put $\lambda^{(0)*}=\lambda$ and for $i\geq1$ put
\[
\lambda^{(i)*}= \lambda^{(i-1)*} \setminus \{p\text{-rim* of }\ \lambda^{(i-1)*}\},
\]
where we chose $l$ maximal with respect to $\lambda^{(l)*} \neq \emptyset$; so $\lambda^{(l+1)*} = \emptyset$. We call the $p$-rim* of $\lambda^{(i)*}$ the \emph{$i$-th $p$-rim*} of $\lambda$.

\begin{remark}\label{removeprim}
Notice that the $p$-rim* is only defined for self-conjugate partitions, but we claim that the diagrams $\lambda^{(i)*}$ are well defined, given the fact that $\textup{Rim}^*_p(\lambda)$ is \emph{symmetric} in the sense that $(u,v)\in \textup{Rim}^*_p(\lambda)$ if and only if $(v,u)\in \textup{Rim}^*_p(\lambda)$. Therefore, removing these nodes from $[\lambda]$ to obtain $\lambda^{(1)*}$ results again in a self-conjugate partition and then so it is for every $\lambda^{(i)*}$. In other words, if $\lambda^{(i)*}$ is self-conjugate, then $\lambda^{(i+1)*}$ is self-conjugate.
\end{remark}

\begin{example}\label{exprims}
Let $p=3$ and $\lambda=(6,5^2,3^2,1)$. Then
\[
[\lambda]=\lambda^{(0)*}=\gyoung(;;;;!\fg;;!\fw,;;;;!\fg;!\fw,;;!\fg;;;!\fw,;;!\fg;,;;;,;) 
\hspace{30pt}
\lambda^{(1)*}=\gyoung(;;;!\fg;!\fw,;;!\fg;;!\fw,;!\fg;,;;)
\hspace{30pt}
\lambda^{(2)*}=\gyoung(;!\fg;;,;;,;)
\hspace{30pt}
\lambda^{(3)*}=\gyoung(!\fg;)
\]

where shaded boxes represent the $i$-th $p$-rim* of $\lambda$. 

\end{example}
\medskip

 For the partition $\lambda^{(i)*}$, obtained by succesively removing nodes on the $p$-rim*, starting with $\lambda=\lambda^{(0)*}$, we denote $a^*_{\lambda^{(i)*}}$ as $a^*_i$. Similarly $r^*_{\lambda^{(i)*}}=r^*_i$ and $\varepsilon^*_{\lambda^{(i)*}}=\varepsilon^*_i$.
 All these values associated to self-conjugate partitions may seem technical, and they are better understood by means of the Young diagram, see the following example.
 
\begin{example}
Let $p=3$, $\lambda=(4^2,2^2)$, and $\mu=(3,2,1)$
\[
\lambda=\gyoung(;;;!\fg;!\fw,;;!\fg;;!\fw,;!\fg;,;;)
\hspace{30pt}
\mu=\gyoung(;!\fg;;,;;,;)
\]
We have $a^*_\lambda=6$, $\varepsilon^*_\lambda=0$, and $r^*_\lambda=3$. For $\mu$, we have $a^*_\mu=5$, $\varepsilon^*_\mu=1$, and $r^*_\mu=3$.
\end{example}

\begin{definition} Let $\lambda$ be a self-conjugate partition.  The \emph{BG-symbol} of  $\lambda$ is
\begin{equation}
\bgs{p}(\lambda):=
\begin{pmatrix}
a^*_0 & a^*_1 & \cdots & a^*_l \\
r^*_0 & r^*_1 & \cdots & r^*_l
\end{pmatrix}.
\end{equation}
\noindent The \emph{length} of the BG-symbol is $l$.

\end{definition}

\begin{example} If $p=3$, the BG-symbol of the partition $\lambda=(6,5^2,3^2,1)$ is

\[
\gyoung(!\fggg;!\fgg;;!\fg;!\fw;;!\fgg,;;!\fg;;!\fw;,!\fgg;!\fg;!\fw;;;!\fg,;;!\fw;,;;;,;) 
\hspace{30pt}
\bgs{3}(\lambda)=
\begin{pmatrix}
11 & 6 & 5 & 1 \\
6 & 3 & 3 & 1
\end{pmatrix}.
\]

\noindent In this diagram, each $i$-th $3$-rim* is shown in a different shade.

\end{example}
\medskip
The following lemmas will allow us to prove that two different self-conjugate partitions correspond to different BG-symbols. Lemma \ref{lema1} is an analogue of \cite[2.1]{mulli}. Its proof is quite technical and the arguments are easier to understand with an example, see Example \ref{examplelemma1}.

\begin{lemma}\label{parite}
Let $\lambda$ be a self-conjugate partition. If $a^*_\lambda$ is an even number, then $p \mid a^*_\lambda$.
\end{lemma}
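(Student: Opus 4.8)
The plan is to show that the parity of $a^*_\lambda$ is governed entirely by whether the unique diagonal node of the rim lies in the $p$-rim, and that when it does not, the nodes of $U_\lambda$ decompose into complete $p$-segments, each contributing $p$ to the count.

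First I would record the combinatorics of the diagonal. Writing $k=k(\lambda)$, the node $(k,k)$ is the only node of the ordinary rim on the diagonal: for $i<k$ one has $(i+1,i+1)\in[\lambda]$, so $(i,i)$ is not on the rim, while $(k+1,k+1)\notin[\lambda]$, so $(k,k)$ is. Hence the diagonal nodes of $\textup{Rim}^*_p(\lambda)=U_\lambda\cup L_\lambda$ are exactly the diagonal nodes of the $p$-rim, namely $\{(k,k)\}$ if $(k,k)\in\textup{Rim}_p(\lambda)$ and $\emptyset$ otherwise. By Remark \ref{parite2}, $a^*_\lambda$ is even precisely when $\textup{Rim}^*_p(\lambda)$ has no diagonal node, hence precisely when $(k,k)\notin\textup{Rim}_p(\lambda)$. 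In that case $U_\lambda$ and $L_\lambda$ are disjoint, so $a^*_\lambda=2\,\#U_\lambda$; since $p$ is odd it then suffices to prove $p\mid \#U_\lambda$.

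Next I would describe the rim by its labels. Traversing the rim from the top-right node (label $1$) to the bottom-left node, the leftmost rim node of row $i$ and the rightmost rim node of row $i+1$ both lie in column $\lambda_{i+1}$, so consecutive rim nodes are adjacent and each step decreases $j-i$ by exactly $1$. Thus the map sending a label to the value $j-i$ of its node is strictly decreasing; the diagonal node $(k,k)$ (where $j-i=0$) is the unique node with its label, the nodes above the diagonal carry the smaller labels and those below carry the larger ones. Moreover each $p$-segment is, by construction, a block of consecutive labels (of length $p$, except possibly the last). I now assume $a^*_\lambda$ is even, so that the label of $(k,k)$ is skipped by the $p$-rim. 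The core observation is then immediate: a block of consecutive labels omitting the label of $(k,k)$ cannot contain labels both smaller and larger than it, so by the monotonicity above every $p$-segment lies entirely above the diagonal or entirely below it, and $U_\lambda$ is exactly the union of the segments lying above. Since $\lambda\neq(1)$ (otherwise $a^*_\lambda=1$ is odd), the bottom-left rim node $(\lambda_1,1)$ lies strictly below the diagonal, so the final $p$-segment is a below-diagonal one; hence every above-diagonal segment is non-final and therefore complete, with exactly $p$ nodes. Writing $a$ for the number of above-diagonal segments gives $\#U_\lambda=ap$, whence $a^*_\lambda=2ap$ and $p\mid a^*_\lambda$.

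I expect the main obstacle to be the bookkeeping in the last step: one must be certain that skipping the diagonal label forces the segmentation to restart below the diagonal rather than straddling it, and that no above-diagonal segment can be the (possibly short) final segment. Both points rest on the strict monotonicity of $j-i$ along the rim and on $p$-segments being intervals of labels, which is why I would establish these two structural facts carefully before invoking them.
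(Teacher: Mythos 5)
Your proof is correct and takes essentially the same route as the paper's, which simply asserts that when $\textup{Rim}^*_p(\lambda)$ has no diagonal node the set $U_\lambda$ consists only of complete $p$-segments of length $p$ and concludes $p \mid \#(U_\lambda\cup L_\lambda)=a^*_\lambda$; you supply the supporting details (strict monotonicity of $j-i$ along the rim labels, $p$-segments being intervals of labels, and the final segment lying below the diagonal) that the paper leaves implicit. One tiny imprecision: the final $p$-segment need not contain the node $(\lambda_1,1)$ itself (it may stop at some $(\lambda_1,j)$ with $j>1$), but its last node is always in the bottom row and, once the diagonal node is excluded from the $p$-rim, is still strictly below the diagonal, so your conclusion is unaffected.
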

\begin{proof}
From the definition (or see Remark \ref{parite2}), $a^*_\lambda$ is even if and only if $U_\lambda \cap L_\lambda = \emptyset$. Then the $p$-rim* of $\lambda$ does not contain diagonal nodes. From the definition of $\textup{Rim}^*_p(\lambda)$, this means that the set $U_\lambda$ only contains $p$-segments of length $p$. And then the same is true for $L_\lambda$. Therefore
\[
p \mid \#\,(U_\lambda \cup L_\lambda) =a^*_\lambda.
\] 
\end{proof}
The converse is not true in general, for example, if $p=3$ and $\lambda=(5,3,2,1,1)$, we have that $a^*_\lambda=9$.

\begin{lemma}\label{lema1}
Let $\tilde{\lambda}$ be a self-conjugate partition, $\varepsilon\in \{0,1\}$  and $m$, a residue modulo $p$, such that $m=0$ if $\varepsilon=0$. Then, there exists a unique self-conjugate partition $\lambda$ such that 
\begin{enumerate}
\item[(i)] $a^*_\lambda \equiv \varepsilon\ (\textup{mod}\ 2)$;
\item[(ii)] $r_\lambda^*-\varepsilon_\lambda^* \equiv m\ (\textup{mod}\ p)$ and 
\item[(iii)] $\lambda^{(1)*}=\tilde{\lambda}$.

\noindent Moreover, if $\tilde{\lambda} \in \bg{p}{}$, and $p \nmid 2m+1$ when $\varepsilon=1$, then $\lambda \in \bg{p}{}.$
\end{enumerate}

\end{lemma}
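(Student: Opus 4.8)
The plan is to prove existence and uniqueness of $\lambda$ constructively, by reversing the $p$-rim* removal operation: given $\tilde\lambda$, conditions (i) and (ii) will pin down exactly how many nodes $a^*_\lambda$ the outermost $p$-rim* of the sought partition $\lambda$ must have, and then condition (iii) forces $\lambda$ to be obtained from $\tilde\lambda$ by attaching a symmetric $p$-rim* with that number of nodes. First I would observe that conditions (i) and (ii) together determine $a^*_\lambda$ modulo $2p$. Indeed, $\varepsilon^*_\lambda=a^*_\lambda \bmod 2$, so (i) fixes the parity of $a^*_\lambda$; and since $r^*_\lambda=(a^*_\lambda+\varepsilon^*_\lambda)/2$, condition (ii) reads $(a^*_\lambda+\varepsilon^*_\lambda)/2-\varepsilon^*_\lambda=(a^*_\lambda-\varepsilon^*_\lambda)/2\equiv m\ (\mathrm{mod}\ p)$, i.e. $a^*_\lambda\equiv 2m+\varepsilon^*_\lambda\ (\mathrm{mod}\ 2p)$. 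By Lemma \ref{parite} the parity constraint is automatically compatible with divisibility by $p$ (an even $p$-rim* has $p\mid a^*_\lambda$), so the compatibility hypothesis "$m=0$ if $\varepsilon=0$" is exactly what makes this congruence consistent. Thus the number of nodes to be added back is determined modulo $2p$.

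Next I would show that $\lambda$ is determined, not merely modulo $2p$, by fixing the \emph{geometry} of how the $p$-rim* is reattached to $\tilde\lambda$. The key point, analogous to the reconstruction in Remark \ref{RemConst} for the ordinary Mullineux symbol, is that the $p$-rim* of $\lambda$ consists of full $p$-segments (plus possibly one diagonal node when $\varepsilon=1$), and attaching it amounts to prepending a new symmetric outer layer to $\tilde\lambda=\lambda^{(1)*}$. I would argue that the first segment must start at a canonical position dictated by $\tilde\lambda$ (the top-right, wrapping down the rim symmetrically), so that the only freedom is the total length $a^*_\lambda$; together with the modular determination above this gives at most one admissible $\lambda$, yielding uniqueness. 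For existence I would exhibit this $\lambda$ explicitly by the symmetric analogue of the construction in Remark \ref{RemConst}: add $r^*_\lambda=(a^*_\lambda+\varepsilon^*_\lambda)/2$ nodes above or on the diagonal forming $U_\lambda$, then reflect to get $L_\lambda$, and verify that removing this layer recovers $\tilde\lambda$, that the layer is genuinely the $p$-rim* of the result (each $p$-segment has length $p$, with a single diagonal node iff $\varepsilon=1$), and that (i), (ii) hold by construction. The symmetry ensures, via Remark \ref{removeprim}, that $\lambda$ is self-conjugate.

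For the final BG-assertion I would assume $\tilde\lambda\in\bg{p}{}$ and show $\lambda\in\bg{p}{}$, i.e. that no diagonal hook-length of $\lambda$ is divisible by $p$. The diagonal hooks of $\lambda$ fall into two groups: those already present in $\tilde\lambda$ (whose hook-lengths are unchanged or shifted in a controlled way by the added symmetric layer) and the new outermost diagonal hook created when $\varepsilon=1$ (when the $p$-rim* contains a diagonal node). For the inner hooks I would check that attaching a full symmetric layer of complete $p$-segments changes each diagonal hook-length by a multiple of $2p$ minus the symmetric contribution, preserving non-divisibility by $p$ because $\tilde\lambda$ had none; for the new outer hook, its length is governed by $a^*_\lambda$, and the hypothesis $p\nmid 2m+1$ when $\varepsilon=1$ is precisely the condition that this new diagonal hook-length is coprime to $p$ (since the innermost diagonal hook-length of the new layer equals $2r^*_\lambda-1=a^*_\lambda=2m+1\pmod{2p}$ in the $\varepsilon=1$ case).

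The main obstacle I expect is the second paragraph: rigorously showing that the reattachment is geometrically forced, i.e. that once $a^*_\lambda$ is fixed modulo $2p$ there is a \emph{unique} way to wrap a symmetric $p$-rim* of that size onto $\tilde\lambda$ so that removing it returns $\tilde\lambda$ and so that it is actually the $p$-rim* (not some other layer) of the reconstructed partition. This requires carefully tracking the $p$-segment boundaries along the rim as they cross the diagonal, and verifying that the candidate layer has no defective segment except the allowed diagonal node; this is the technical heart of the lemma, and it is why the author signals that the proof is intricate and best followed through Example \ref{examplelemma1}.
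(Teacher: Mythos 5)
Your overall strategy is the paper's: reconstruct $\lambda$ by reattaching a symmetric outer layer whose size is dictated by $\varepsilon$ and $m$, then rule out diagonal $(p)$-hooks by hook-length bookkeeping; and your opening computation $a^*_\lambda\equiv 2m+\varepsilon\ (\textup{mod}\ 2p)$ is correct and is implicitly how $m$ enters the paper's construction. However, two of your structural claims are wrong as stated. First, the attached layer does \emph{not} consist of full $p$-segments plus possibly a lone diagonal node: when $\varepsilon=1$ the $p$-segment of $U_\lambda$ meeting the diagonal has $m+1\leq p$ nodes, and this partial segment is precisely where $m$ enters the geometry — your own congruence already contradicts ``all segments have length $p$''. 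This is why the paper builds the layer starting from the diagonal end (first node at $(d,\tilde{\lambda}_d+1)$ or $(d+1,d+1)$ according to $\varepsilon$, then a group of $m$ further nodes, then full groups of $p$ following the rim up to row $1$); starting ``from the top-right'' is awkward since $\lambda_1$ is not known in advance, and the forcing at the diagonal end is exactly what uses the data $(\varepsilon,m)$.

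Second, and more seriously, the BG-preservation argument as you sketch it would fail. Diagonal hook-lengths of $\lambda$ do not differ from the same-indexed hooks of $\tilde{\lambda}$ by multiples of $2p$, and the hook of length $2m+1$ is in general \emph{not} the new outermost one. If $(i,i)$ were a diagonal $(p)$-hook of $\lambda$, one must look at the $p$-segment of $U_\lambda$ containing $(i,\tilde{\lambda}_i+1)$ and split into cases: if that segment starts at row $i$ and ends on the diagonal, then $h^\lambda_{ii}=2(m+1)-1=2m+1$ (here $i$ is the \emph{top} row of the partial segment, which can lie well inside the diagram); if it starts at row $i$ and ends off the diagonal, then $h^\lambda_{ii}=2p+h^{\tilde{\lambda}}_{aa}$ for a \emph{different} diagonal position $(a,a)$; if it starts above row $i$, then $h^\lambda_{ii}=h^{\tilde{\lambda}}_{i-1,i-1}$, an index shift. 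A concrete instance: $p=5$, $\tilde{\lambda}=(1)$, $\varepsilon=1$, $m=2$ gives $\lambda=(3,2,1)$, whose new outermost diagonal hook has length $h^\lambda_{22}=1=h^{\tilde{\lambda}}_{11}$, while the hook of length $2m+1=5$ sits at $(1,1)$; so the hypothesis $p\nmid 2m+1$ protects an \emph{inner} diagonal hook here, not the new outer one. Your two-group dichotomy (inner hooks shifted by multiples of $2p$ versus one new outer hook controlled by $2m+1$) therefore does not hold, and the case analysis above is the missing content.
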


\begin{proof}

Given $\varepsilon \in \{0,1 \}$ and $m$, a residue modulo $p$, let us see that there is a unique way to add nodes to $\tilde{\lambda}$ to obtain a self-conjugate partition $\lambda$ such that the added nodes are the $p$-rim* of $\lambda$.

Let us study how nodes $(i,j)$ over the diagonal ($i\leq j$) must be added. This will determine all nodes that must be added (if $(i,j)$ is added to $\tilde{\lambda}$, then $(j,i)$ is added as well).

First, the last row $i$ over the diagonal that will contain new nodes $(i,j)$ is uniquely fixed by $\tilde{\lambda}$ and $\varepsilon$. Indeed, let $d=k(\tilde{\lambda})$. If $\varepsilon=0$, then $i=d$ and $(d,\tilde{\lambda}_d+1)$ must be added to $\tilde{\lambda}$. If $\varepsilon=1$, then $i=d+1$ and $(d+1,d+1)$ must be added to $\tilde{\lambda}$.

Now, let $(i,j)$ be the first node that we add (with $i$ fixed as before by $\tilde{\lambda}$ and $\varepsilon$) and $j\in \{\tilde{\lambda}_d+1,d+1\}$ depending on $\tilde{\lambda}$ and $\varepsilon$. Starting from this node, it is clear that there is a unique way to add nodes such that (i), (ii), and (iii) hold: If the position $(i+1,j)$ just above $(i,j)$ is empty in $\tilde{\lambda}$, we add a node in that position, otherwise we add a node in $(i,j+1)$. We repeat this procedure for adding nodes until we have added $m$ nodes (including $(i,j)$ if $\varepsilon=0$, not including $(i,j)$ if $\varepsilon=1$). If the last added node is in row $1$ we stop here. If it is added in row $k>1$, we add a node in row $k-1$ in position $(k-1,\tilde{\lambda}_{k-1}+1)$ and we restart the procedure to keep adding nodes until having added $p$ nodes. We iterate this procedure, of adding groups of $p$ nodes, until reaching the first row. This way we added nodes over the diagonal. Finally for each node $(a,b)$ added, we add its reflection through the diagonal $(b,a)$. And we obtain a self-conjugate partition $\lambda$.

It remains to verify that $\lambda^{(1)*}=\tilde{\lambda}$. If $\varepsilon=1$, it is straightforward that $\lambda^{(1)*}=\tilde{\lambda}$. Since when removing the nodes of the $p$-rim* of $\lambda$ over the diagonal we eventually reach a diagonal node, and then just remove the reflection of the removed nodes. It is clear that in this case  we obtain $\tilde{\lambda}$. If $\varepsilon=0$, the condition $m=0$ says that a $p$-segment of $\lambda$ will eventually reach the row $d$ and this $p$-segment has exactly $p$ nodes, so that there is no ambiguity when removing $p$-segments and $\lambda^{(1)*}=\tilde{\lambda}$.

For the last part of the theorem, suppose that $\tilde{\lambda} \in \bg{p}{}$, and let us see that $\lambda$ obtained as above is also in $\bg{p}{}$. In other words, we are assuming that $\tilde{\lambda}$ does not contain any diagonal $(p)$-hooks and we want to show that the same is true for $\lambda$.

Suppose that $\lambda$ has a diagonal $(p)$-hook, say the $(i,i)_\lambda$-th hook, that is $h^\lambda_{ii}=pk$ for some integer $k>0$. For a partition $\mu$, we set the convention $h^\mu_{ij}=0$ if $(i,j)\notin [\mu].$

Since $\tilde{\lambda} \in \bg{p}{}$, then the $(i,i)_\lambda$-th hook is different from the  $(i,i)_{\tilde{\lambda}}$-th hook since if they were equal, $\tilde{\lambda}$ would have a $(p)$-hook, which is not possible. Therefore $(i,\tilde{\lambda}_i+1) \in [\lambda]$. Since this node is not in $[\tilde{\lambda}]$, by definition, it is on the $p$-rim* of $\lambda$, in particular, it belongs to a $p$-segment of $\textup{Rim}_p^*(\lambda)$ above the diagonal. Consider the two cases: this $p$-segment starts at row $i$, or this $p$-segment starts before row $i$, that is, this $p$-segment starts at a row $j$ for $1 \leq j <i$.

\begin{itemize}
\item If the $p$-segment containing node $(i,\tilde{\lambda}_i+1)$ starts at row $i$, let $(i,j)$ be the first node of this $p$-segment and $(a,b)$ its last node ($i \leq a$). Then $a \leq b$ because this segment is above the diagonal.

Let $N$ be the number of nodes on this $p$-segment. Then we have:
\[
\begin{array}{ccc}
N & = &a-i+j-b+1, \\
h_{ii}^{\lambda}& =&1+2(j-i),\\
h_{aa}^{\tilde{\lambda}}& =&1+2(b-a-1),
\end{array}
\]
where the last identity holds if $b>a$ (since this implies that $(a,b-1)\in\tilde{\lambda}$). Then we have
\[h_{ii}^{\lambda}=
\begin{cases}
2N-1 & \text{if } a=b, \\
2N + h_{aa}^{\tilde{\lambda}} & \text{if } a<b.
\end{cases}
\]
If $a=b$, this $p$-segment is the last segment in $U_\lambda$ and $\varepsilon_\lambda^*=1$. So that $N=m+1$ and we get $h^\lambda_{ii}=2m+1$. This contradicts $p \nmid 2m+1$.
If $a<b$, then the last node of this $p$-segment, $(a,b)$ is not a diagonal node so that $N=p$ and we get $h^\lambda_{ii}=2p+h^{\tilde{\lambda}}_{aa}$, which implies $p \mid h^{\tilde{\lambda}}_{aa}$, a contradiction.

\item  If the $p$-segment containing node $(i,\tilde{\lambda}_i+1)$ starts at a row $j$ with $j<i$, then it contains  nodes on row $i-1$, in particular $(i-1,\tilde{\lambda}_{i-1}+1)$. The next node on this $p$-segment is the node just below: $(i,\tilde{\lambda}_{i-1}+1)$. Then $\lambda_i=\tilde{\lambda}_{i-1}+1$. Let us see that $h^{\tilde{\lambda}}_{(i-1,i-1)}=h^\lambda_{ii}$. Since these are diagonal hooks contained in self-conjugate partitions, their lengths are calculated as follows

\begin{align*}
h^{\tilde{\lambda}}_{(i-1,i-1)} &=2(\tilde{\lambda}_{i-1}-(i-1))+1 \\
						&=2\tilde{\lambda}_{i-1} -2i+3,
\end{align*}
and 
\begin{align*}
h^{\lambda}_{(i,i)} &=2(\lambda_{i}-i)+1 \\
					&=2((\tilde{\lambda}_{i-1}+1)-i)+1 \\
					&=2\tilde{\lambda}_{i-1} -2i+3.
\end{align*}

Since $p \mid h^{\lambda}_{(i,i)}$, then $p \mid h^{\tilde{\lambda}}_{(i-1,i-1)}$, a contradiction.

%
%
\end{itemize}
 In conclusion, $\lambda$ does not have any diagonal $(p)$-hooks, that is, $\lambda \in \bg{p}{}$.

\end{proof}

\begin{example}\label{examplelemma1}
Let $p=3$. We use the notations of Lemma \ref{lema1}. Consider the self-conjugate partition $\tilde{\lambda}=(6,4,2^2,1^2)$. 

\Yvcentermath1
\[
[\tilde{\lambda}]=\ \yng(6,4,2,2,1,1)
\]
 \begin{itemize}
 \item[-] Let $\varepsilon=0$, then $m=0$. Let us see that there is only one self-conjugate partition $\lambda$ satisfying: $a_\lambda^*$ is even,
$r_\lambda^*-\varepsilon_\lambda^* =\#\,U_\lambda - \varepsilon_\lambda^* \equiv 0\ (\textup{mod}\ 3)$ and 
 $\lambda^{(1)*}=\tilde{\lambda}$. We add to $[\tilde{\lambda}]$ the nodes of $\text{Rim}_3^*(\lambda)=U_\lambda\cup L_\lambda$.

In this case, since $a^*_\lambda \equiv 0\ (\text{mod}\ 2)$, then $U_\lambda$ does not contain diagonal nodes. That is, $U_\lambda$ consists only on nodes strictly over the diagonal, so that the last row containing nodes from $U_\lambda$ is row $2$, since $k(\tilde{\lambda})=2$.

Since $m=0$, then every $3$-segment of $U_\lambda$ has $3$ nodes. The bottom $3$-segment of $U_\lambda$ is shown in shaded nodes in the following diagram

\[
\gyoung(;;;;;;,;;;;!\lt!\fg;;;!\lno!\fw,;;,;;,;,;)
\]

Since we have not reached the top of $[\tilde{\lambda}]$, there is at least another $3$-segment, which starts at the following upper row :
\[
\gyoung(;;;;;;!\lt!\fg;;;!\fw!\lno,;;;;!\fg;;;!\fw,;;,;;,;,;)
\]
Now, $\lambda$ is self-conjugate, then for each upper node $(i,j)$ that we added, we add the node $(j,i)$ (or also because $(i,j)\in U_\lambda$ if and only if $(j,i)\in L$)

\[[\lambda]=\ \ 
 \gyoung(;;;;;;!\fg;;;!\fw,;;;;!\fg;;;!\fw,;;,;;,;!\lt!\fg;!\lno!\fw,;!\lt!\fg;,;;,;,;)
\]

 So that $\lambda=(9,7,2^5,1^2)$ is the only possibility for $\lambda$ self-conjugate such that $a^*_\lambda$ is even, 
$r_\lambda^*-\varepsilon_\lambda^*= \# U_\lambda \equiv 0\ (\textup{mod}\ 3)$ and 
 $\lambda^{(1)*}=\tilde{\lambda}$.
 
\item[-] Let $\varepsilon=1$, and $m=2$. Let us see that there is only one possible partition $\lambda$ satisfying: $a_\lambda^*$ is odd, 
$r_\lambda^*-\varepsilon_\lambda^* =\#\,U_\lambda-\varepsilon_\lambda^* \equiv 2\ (\textup{mod}\ 3)$ and 
 $\lambda^{(1)*}=\tilde{\lambda}$. We add to $[\tilde{\lambda}]$ the nodes of $\text{Rim}_3^*(\lambda)=U_\lambda\cup L_\lambda$.
 
In this case, $a^*_\lambda \equiv 1\ (\text{mod}\ 2)$, then there is a diagonal node in $U_\lambda$:
\[
\gyoung(;;;;;;,;;;;,;;!\lt!\fg;!\lno!\fw,;;,;,;)
\]
We add now the rest of the nodes in $U_\lambda$. Here $r_\lambda^*-\varepsilon_\lambda^* =\#(U_\lambda \setminus \{(3,3)\}) \equiv 2\ (\textup{mod}\ 3)$. That means that the rest of the nodes in $U_\lambda$ contain one $3$-segment of $2$ nodes, we add this $3$-segment
\[
\gyoung(;;;;;;,;;;;,;;!\fg;!\lt;;!\lno!\fw,;;,;,;)
\]
and the rest are $3$-segments of $3$ nodes:
\[
\gyoung(;;;;;;!\lt!\fg;;;!\lno!\fw,;;;;!\lt!\fg;;;!\lno!\fw,;;!\fg;;;!\fw,;;,;,;)
\]
and finally, for making $\lambda$ self-conjugate:
\[
\gyoung(;;;;;;!\fg;;;!\fw,;;;;!\fg;;;!\fw,;;!\fg;;;!\fw,;;!\lt!\fg;!\lno!\fw,;!\lt!\fg;;!\fw,!\lno;!\lt!\fg;,;;,;,;)
\]
And we see that $\lambda=(9,7,5,3^2,2^2,1^2)$ is the only possibility for having $a^*_\lambda$ odd, 
$r_\lambda^*-\varepsilon_\lambda^*= \#U_\lambda -1\equiv 2\ (\textup{mod}\ 3)$, and 
 $\lambda^{(1)*}=\tilde{\lambda}$.
 \end{itemize}
\end{example}
\medskip

\begin{proposition}\label{injective}
Let $p$ be an odd prime. Two different self-conjugate partitions have different BG-symbols. In other words, the BG-symbol gives rise to an injective map from self-conjugate partitions to the set of two-row positive integer symbols.
\end{proposition}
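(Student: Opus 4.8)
The plan is to prove injectivity by showing that the BG-symbol of a self-conjugate partition $\lambda$ determines $\lambda$ uniquely, proceeding by reconstruction. The key observation is that Lemma \ref{lema1} already gives us an invertibility statement at the level of a single $p$-rim* removal: given the partition $\tilde\lambda = \lambda^{(1)*}$ together with the parity data $\varepsilon$ and the residue $m$, there is a \emph{unique} self-conjugate $\lambda$ with $a^*_\lambda \equiv \varepsilon \pmod 2$, with $r^*_\lambda - \varepsilon^*_\lambda \equiv m \pmod p$, and with $\lambda^{(1)*}=\tilde\lambda$. So the heart of the proof is to check that each column $\binom{a^*_i}{r^*_i}$ of the BG-symbol encodes exactly the data $(\varepsilon, m)$ that Lemma \ref{lema1} consumes, and then to run an induction on the length $l$ of the symbol.

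First I would suppose, for contradiction (or directly), that two self-conjugate partitions $\lambda$ and $\mu$ satisfy $\bgs{p}(\lambda)=\bgs{p}(\mu)$. Both symbols have the same length $l$, and I argue by downward reconstruction, i.e.\ by induction on $l$. The base case is $l=0$: here the symbol is a single column $\binom{a^*_0}{r^*_0}$, the whole partition equals its own $p$-rim*, and I must check that a self-conjugate partition which is its own $p$-rim* is determined by $(a^*_0, r^*_0)$. For the inductive step, I extract from the shared leading column $\binom{a^*_0}{r^*_0}$ the quantities $\varepsilon := a^*_0 \bmod 2 = \varepsilon^*_0$ and $m := (r^*_0 - \varepsilon^*_0) \bmod p$; note that when $\varepsilon=0$ Lemma \ref{parite} forces $p \mid a^*_0$, and since every above-diagonal $p$-segment then has full length $p$ while $r^*_0 = a^*_0/2$, one checks $m=0$, which is exactly the compatibility condition Lemma \ref{lema1} requires. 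By the inductive hypothesis applied to the symbols of $\lambda^{(1)*}$ and $\mu^{(1)*}$ (which are the BG-symbols obtained by deleting the first column and coincide by assumption), we get $\lambda^{(1)*}=\mu^{(1)*}=:\tilde\lambda$. Now Lemma \ref{lema1}, applied with this $\tilde\lambda$, this $\varepsilon$, and this $m$, produces a \emph{unique} self-conjugate partition with the prescribed first-column data and with $\lambda^{(1)*}=\tilde\lambda$; since both $\lambda$ and $\mu$ are such partitions, $\lambda=\mu$.

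The one point requiring genuine care — and what I expect to be the main obstacle — is verifying that the pair $(a^*_0, r^*_0)$ really does recover the \emph{same} $(\varepsilon, m)$ that Lemma \ref{lema1} takes as input, rather than merely determining the total count of added nodes. Since $\varepsilon^*_0 = a^*_0 \bmod 2$ is immediate, the issue is whether $r^*_0 - \varepsilon^*_0 \bmod p$ coincides with the residue $m$ controlling the above-diagonal $p$-segments in the construction of Lemma \ref{lema1}. In the lemma's notation the nodes added above the diagonal number $r^*_\lambda$, broken into one possibly-short segment of $m$ (or $m+1$, absorbing the diagonal node when $\varepsilon=1$) together with full $p$-segments; tracking the bookkeeping through the $\varepsilon=0$ and $\varepsilon=1$ cases confirms $r^*_0 - \varepsilon^*_0 \equiv m \pmod p$ in both, using the relation $r^*_\lambda = (a^*_\lambda + \varepsilon^*_\lambda)/2$ established above. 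Once this identification is nailed down, the induction closes without further calculation, and injectivity follows.
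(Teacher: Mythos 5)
Your proof is correct and follows essentially the same route as the paper: induction on the length of the BG-symbol, with the base case handled by observing that the partition is a self-conjugate hook determined by $r^*_0$, and the inductive step closed by the uniqueness statement of Lemma \ref{lema1} together with Lemma \ref{parite} supplying the compatibility $m=0$ when $\varepsilon=0$. The identification of $(\varepsilon,m)$ with the data of the first column, which you flag as the delicate point, is in fact immediate from the way conditions (i) and (ii) of Lemma \ref{lema1} are phrased in terms of $a^*_\lambda$ and $r^*_\lambda-\varepsilon^*_\lambda$, which is exactly how the paper treats it.
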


\begin{proof}
We proceed by induction on $l$, the length of the BG-symbol. Let $l=0$. Let $\lambda$ be a self-conjugate partition with BG-symbol
\[\bgs{p}(\lambda)=\begin{pmatrix}
a^*_0 \\
r^*_0\\
\end{pmatrix}.
\]
Notice that having a BG-symbol of length $0$ means that $\lambda=\lambda^{(0)*}=\lambda^{(l)*}$ is a hook and its size is $a^*_0$, that is, there are positive integers $u,v$ such that $
\lambda=(u,1^v)\ \ \text{and}\ \ u+v=a^*_0
$. But $\lambda$ is self-conjugate, then $u-1=v$, so that $a^*_0=2u-1$, and $\varepsilon_0^*=1$. Then 
\[
r^*_0=\frac{a_0^*+\varepsilon_0^*}{2} = u.
\]
Therefore $\lambda=(r^*_0,1^{r^*_0-1})$. This way, $\lambda$ is determined from its BG-symbol and, from this reasoning, we see that any self-conjugate partition with BG-symbol $\bgs{p}(\lambda)$ is completely determined by it and is then equal to $\lambda$. 

Now, fix $l>0 \in\mathbb{N}$, let $\lambda$ be a self-conjugate partition with BG-symbol
\[
\bgs{p}(\lambda)=
\begin{pmatrix}
a^*_0 & a^*_1 & \cdots & a^*_l \\
r^*_0 & r^*_1 & \cdots & r^*_l
\end{pmatrix},
\]
and let $\mu$ be a self-conjugate partition such that $\bgs{p}(\mu)=\bgs{p}(\lambda)$.

Then, by definition, the BG-symbol of $\lambda^{(1)*}$ (and also of $\mu^{(1)*}$) is the BG-symbol of $\lambda$ after removing its first column
\[
\bgs{p}(\lambda^{(1)*})=\bgs{p}(\mu^{(1)*})=
\begin{pmatrix}
 a^*_1 & \cdots & a^*_l \\
 r^*_1 & \cdots & r^*_l
\end{pmatrix}.
\]
By induction, there exists a unique self-conjugate partition $\tilde{\tau}$ such that
\[
\bgs{p}(\tilde{\tau})=
\begin{pmatrix}
 a^*_1 & \cdots & a^*_l \\
 r^*_1 & \cdots & r^*_l
\end{pmatrix}.
\]
Then $\tilde{\tau}=\lambda^{(1)*}=\mu^{(1)*}$. Let us see, from Lemma \ref{lema1}, that $\lambda=\mu$.

Let $\varepsilon= a^*_0\ \textup{mod}\ 2$, and $m= r^*_0-\varepsilon^*_0\ \textup{mod}\ p$. By Lemma \ref{parite}, if $\varepsilon=0$ then $m=0$. Therefore, by Lemma \ref{lema1} there exists a unique self-conjugate partition $\tau$ such that
\begin{itemize}
\item[(i)] $a^*_\tau \equiv \varepsilon\ (\textup{mod}\ 2)$;
\item[(ii)] $r^*_\tau-\varepsilon^*_\tau \equiv m\ (\textup{mod}\ p)$ and 
\item[(iii)] $\tau^{(1)*}=\tilde{\tau}$.
\end{itemize}
But partitions $\lambda$ and $\mu$ are self-conjugate and they satisfy (i) and (ii) since $a^*_0=a^*_\lambda=a^*_\mu$. Moreover $\lambda^{(1)*}=\mu^{(1)*}=\tilde{\tau}$, then by the uniqueness of $\tau$ we have that $\tau=\lambda=\mu$.

\end{proof}

As it turns out, the BG-symbol of a BG-partition is a Mullineux symbol of some self-Mullineux partition. Denote by $\mathcal{M}_p$ the set of Mullineux symbols of the self-Mullineux partitions $M_p$.

\begin{proposition}\label{prop:imagesymbol} Let $p$ be an odd prime and $\lambda$ a BG-partition. The BG-symbol of $\lambda$, $\bgs{p}{}(\lambda)$ is the Mullineux symbol of some self-Mullineux partition. That is
\[
\bgs{p}(\bg{p}{}) \subseteq \mathcal{M}_p.
\]
\end{proposition}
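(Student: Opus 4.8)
The plan is to describe $\mathcal{M}_p$ intrinsically and then test the BG-symbol against that description. A two-row array with rows $(a_0,\dots,a_l)$ and $(r_0,\dots,r_l)$ lies in $\mathcal{M}_p$ if and only if it satisfies the inequalities of Proposition \ref{cinco} (so it is $G_p(\mu)$ for a unique $p$-regular $\mu$) \emph{and} in addition $r_i=s_i$ for all $i$, that is $2r_i=a_i+\varepsilon_i$, where $\varepsilon_i=0$ when $p\mid a_i$ and $\varepsilon_i=1$ otherwise; this last condition is exactly $m(\mu)=\mu$. Since by construction the BG-symbol already satisfies $2r^*_i=a^*_i+\varepsilon^*_i$, the whole proposition reduces to two claims: (I) $\varepsilon^*_i=\varepsilon_i$ for every $i$, equivalently $p\mid a^*_i\iff a^*_i$ is even; and (II) the entries $(a^*_i,r^*_i)$ satisfy the inequalities of Proposition \ref{cinco}. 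Granting both, $\bgs{p}(\lambda)=G_p(\mu)$ for a $p$-regular $\mu$ with $r_i=s_i$, hence $\mu$ is self-Mullineux and $\bgs{p}(\lambda)\in\mathcal{M}_p$.

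For (II), substituting $r_i=(a_i+\varepsilon_i)/2$ collapses all of (1)--(4) of Proposition \ref{cinco} to the single family
\[
\varepsilon^*_i+\varepsilon^*_{i+1}\ \le\ a^*_i-a^*_{i+1}\ <\ 2p+\varepsilon^*_i+\varepsilon^*_{i+1}\qquad(0\le i<l),
\]
together with $1\le a^*_l<2p+\varepsilon^*_l$. The last point is immediate: $\lambda^{(l)*}$ is a self-conjugate hook removed by a single $p$-rim*, so $a^*_l=2u-1$ with $u\le p$. The displayed inequalities hold for \emph{every} self-conjugate partition and express that two consecutive $p$-rim*s cannot differ in size by too much; I would prove them by the bookkeeping on $p$-segments already used for the classical Mullineux symbol, now applied to $U_{\lambda^{(i)*}}$ and $U_{\lambda^{(i+1)*}}$. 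Note that (II) does not use the BG hypothesis.

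The substance is (I). One inclusion is Lemma \ref{parite} applied to each $\lambda^{(i)*}$ (self-conjugate by Remark \ref{removeprim}): if $a^*_i$ is even then $p\mid a^*_i$. For the converse I would show that $a^*_i$ odd forces $p\nmid a^*_i$. The key geometric observation is that, for a self-conjugate $\nu$ with $a^*_\nu$ odd, the $p$-rim* meets the diagonal, and the last $p$-segment of $U_\nu$ runs monotonically (only left/down unit steps) from $(r,\nu_r)$ to the diagonal node $(k(\nu),k(\nu))$; hence its length equals $\nu_r-r+1=\tfrac12(h^\nu_{rr}+1)$, while every earlier segment of $U_\nu$ has full length $p$. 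Therefore
\[
a^*_\nu=2r^*_\nu-1\ \equiv\ h^\nu_{rr}\pmod p,
\]
so $p\mid a^*_\nu$ would force $p\mid h^\nu_{rr}$, i.e.\ a diagonal $(p)$-hook of $\nu$. Taking $\nu=\lambda^{(i)*}$ shows $p\nmid a^*_i$ as soon as $\lambda^{(i)*}$ has no diagonal $(p)$-hook.

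Thus (I) reduces to the propagation statement: $\lambda\in\bg{p}{}$ implies $\lambda^{(i)*}\in\bg{p}{}$ for all $i$, for which (by induction on $l$) it suffices to prove $\lambda\in\bg{p}{}\Rightarrow\lambda^{(1)*}\in\bg{p}{}$. I expect this to be the main obstacle. It is the converse of the ``moreover'' clause of Lemma \ref{lema1}, and it cannot be obtained from a position-by-position comparison of diagonal hooks: when $\textup{Rim}^*_p(\lambda)$ is restored, the individual diagonal hook lengths change (by $0$ or $2p$ after a suitable re-indexing) in a way that does \emph{not} preserve divisibility hook-by-hook, so the offending $(p)$-hook may migrate to another diagonal cell. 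What is preserved is the total number of diagonal $(p)$-hooks, and I would establish the contrapositive—a diagonal $(p)$-hook of $\lambda^{(1)*}$ produces one in $\lambda$—by tracking the $p$-segments of $\textup{Rim}^*_p(\lambda)$ across the diagonal with the same two-case analysis used inside the proof of Lemma \ref{lema1}, read in the build-up direction. Once $\lambda^{(i)*}\in\bg{p}{}$ is available for all $i$, the combination of (I) and (II) completes the proof.
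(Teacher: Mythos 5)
Your overall strategy coincides with the paper's: verify that the BG-symbol satisfies the inequalities of Proposition \ref{cinco} together with the palindromic condition $a_i=2r_i-\varepsilon_i$, the latter reducing to identifying the parity $\varepsilon^*_i$ with the divisibility indicator $\varepsilon_i$. Your part (I) is essentially the paper's: the forward direction is Lemma \ref{parite}; the converse (your congruence $a^*_\nu\equiv h^\nu_{rr}\pmod p$ via the last, possibly short, segment of $U_\nu$) is exactly Lemma \ref{lemmaepsilon}; and the propagation $\lambda\in\bg{p}{}\Rightarrow\lambda^{(1)*}\in\bg{p}{}$ is Lemma \ref{lemma:quitarrim*}, whose contrapositive two-case proof matches your sketch. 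So the ingredients you flag as ``the main obstacle'' are the ones the paper does prove, and your outline for them is on the right track.

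The genuine gap is (II). You assert that the consecutive-column inequalities hold for \emph{every} self-conjugate partition, need no BG hypothesis, and follow from ``bookkeeping on $p$-segments''; in fact this is the longest and most delicate part of the paper's proof, and the BG hypothesis enters it a second time. The difficulty is that $r^*_i=\#\,U_{\lambda^{(i)*}}$ is a count of rim* nodes, not a row count, so Proposition \ref{cinco}(1) for the BG-symbol is not an instance of any statement about $p$-rims of a single partition. The paper's device is to identify $U_\lambda$ with $\textup{Rim}_p(\underline{\lambda})$ for a truncation $\underline{\lambda}$ of $\lambda$ above row $k(\lambda)$, and then to invoke Proposition \ref{cinco}(3) for $\underline{\lambda}$ --- which requires $\underline{\lambda}$ to be $p$-regular, and that is precisely Lemma \ref{lemma:supregular}, whose proof uses the absence of diagonal $(p)$-hooks (for $\lambda=(p^p)$, self-conjugate but not BG, the truncation is $(1^p)$ and Proposition \ref{cinco} is unavailable). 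Even granting the truncation idea, one still needs the four-case analysis on $(\varepsilon^*_0,\varepsilon^*_1)$, with $x=1$ or $x=2$, to control the discrepancies between $\underline{a}$ and $r^*_0$, between $\underline{a}'$ and $r^*_1$ (the node $(k(\lambda),k(\lambda)-1)$ can fall on either rim), and the fact that $\underline{\lambda}^{(1)}$ need not be the truncation of $\lambda^{(1)*}$; and this analysis again calls on Corollary \ref{corollary:epsilon}, hence on BG. None of this appears in your proposal, and your claim that (II) is BG-free is unsubstantiated --- your argument gives no way to decide it. So the core of the proposition is missing.
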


We postpone the proof of this proposition to Section \ref{subsec:imagesymb}, since some technical lemmas are necessary for this proof.

Recall, from Proposition \ref{cinco} and Remark \ref{RemConst}, that to a Mullineux symbol corresponds a unique $p$-regular partition. So that the Mullineux symbol determines a bijection between $p$-regular partitions and their Mullineux symbols. In particular, to a symbol in $\mathcal{M}_p$ corresponds a unique self-Mullineux partition in $M_p$. As a corollary from Proposition \ref{injective}, Proposition \ref{prop:imagesymbol} and from the fact that the sets $\bg{p}{n}$ and $M_p^n$ have the same number of elements, we obtain the following result.

\begin{theorem}\label{th:main} 
We have that
\[
\bgs{p}(\bg{p}{}) = \mathcal{M}_p,
\]
and the BG-symbol provides an explicit bijection between BG-partitions and self-Mullineux partitions. This bijection is given by associating to a BG-partition $\lambda$ its BG-symbol $\bgs{p}{}(\lambda)$, which corresponds to a unique self-Mullineux partition.
This bijection restricts to bijections between $\bg{p}{n}$ and $M^n_p$ for every $n\in\mathbb{N}$.
\end{theorem}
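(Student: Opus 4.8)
The plan is to deduce Theorem~\ref{th:main} as a cardinality-plus-injectivity corollary of the two preceding propositions, working degree by degree in $n$. First I would check that the BG-symbol respects the grading by size. The diagrams $\lambda^{(0)*},\lambda^{(1)*},\ldots,\lambda^{(l)*}$ are obtained by successively deleting the $p$-rim*, so the sets $\textup{Rim}^*_p(\lambda^{(i)*})$ form a partition of $[\lambda]$ and hence $\sum_{i=0}^l a^*_i = |\lambda|$. On the Mullineux side, part~(5) of Proposition~\ref{cinco} gives $\sum_{i=0}^l a_i = |\mu|$ for the $p$-regular partition $\mu$ with Mullineux symbol $G_p(\mu)$. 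Since the two symbols share their top row whenever $\bgs{p}(\lambda) = G_p(\mu)$, a BG-partition of $n$ is sent to a self-Mullineux partition of $n$; thus the composite $\lambda \mapsto \bgs{p}(\lambda) \mapsto \mu$ restricts, for each $n$, to a map $\bg{p}{n} \to M_p^n$.

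Next I would record that this composite is well defined and injective. It lands in $\mathcal{M}_p$ by Proposition~\ref{prop:imagesymbol}, and by Proposition~\ref{cinco} together with Remark~\ref{RemConst} every symbol in $\mathcal{M}_p$ is the Mullineux symbol of a unique self-Mullineux partition, so the passage from a symbol to its partition is a bijection $\mathcal{M}_p \to M_p$. The map $\lambda \mapsto \bgs{p}(\lambda)$ is injective on self-conjugate partitions, a fortiori on BG-partitions, by Proposition~\ref{injective}; composing an injection with a bijection keeps it injective, so the map $\bg{p}{n} \to M_p^n$ is injective.

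Finally I would invoke the equinumerosity recalled in the introduction and proved in Appendix~\ref{appendix}, namely $\#\,\bg{p}{n} = \#\,M_p^n$. As both sets are finite, an injection between finite sets of equal cardinality is automatically a bijection, so $\bg{p}{n} \to M_p^n$ is bijective for every $n$. Taking the union over $n$ then gives the equality $\bgs{p}(\bg{p}{}) = \mathcal{M}_p$ and the global bijection, which restricts to the stated bijections in each degree. The only step that genuinely requires care is the grading argument: without it the count would merely compare $\bigsqcup_n \bg{p}{n}$ with $\bigsqcup_n M_p^n$ as ungraded sets and could not yield the per-$n$ statement. I do not anticipate a real obstacle here, since all the substantive work has already been isolated in Propositions~\ref{injective} and~\ref{prop:imagesymbol}.
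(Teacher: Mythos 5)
Your proposal is correct and follows essentially the same route as the paper, which derives the theorem as a corollary of Proposition~\ref{injective}, Proposition~\ref{prop:imagesymbol}, and the equality $\#\,\bg{p}{n}=\#\,M_p^n$ from Appendix~\ref{appendix}. Your explicit check that the symbol preserves the size grading (via $\sum_i a^*_i=|\lambda|$ and part~(5) of Proposition~\ref{cinco}) is a detail the paper leaves implicit, and it is correctly handled.
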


\begin{remark} If we consider the Mullineux symbol $G_p$ as a bijection from the set of $p$-regular partitions into its image in the set of two-row arrays of integers, then the bijection in Theorem \ref{th:main}, from $\bg{p}{}$ to $M_p$ is given precisely by $G_p^{-1}\circ\bgs{p}{}$.
\end{remark}

\begin{remark} \label{rem:bess}
In \cite{andrewsolsson}, G. Andrews and J. Olsson prove a general partition identity, which depends on some integer parameters. A special case of this identity is the fact that the number of self-Mullineux partitions of a non-negative integer $n$ equals the number of partitions of $n$ with different odd parts, none of them divisible by $p$, which is in turn equal to the number of BG-partitions. 

Now, in \cite{bessenrodt}, C. Bessenrodt shows a combinatorial proof of the Andrews--Olsson identity, which provides, by choosing the right parameters, an explicit bijection between $\bg{p}{n}$ and $M_p^n$. The bijection from Theorem \ref{th:main} is obtained in a more direct way and it is different from Bessenrodt's bijection. In particular, for $p=5$ and $n=20$, the partition $(7,6,3,2^2)\in M_5^{20}$ is mapped to partition $(9,3,2,1^6)\in \bg{5}{20}$ under Bessenrodt's bijection, and it is mapped to $(7,5,2^3,1^2)\in \bg{5}{20}$ under bijection from  Theorem \ref{th:main}.

\end{remark}

\subsection{Proof of the main result}\label{subsec:imagesymb}

In this subsection we prove Proposition \ref{prop:imagesymbol}. For this proof we need some technical lemmas. 

\begin{lemma}
\label{lemma:supregular} Let $p$ be an odd prime and $\lambda=(\lambda_1,\lambda_2,\ldots,\lambda_r)$ a partition in $\bg{p}{}$. Let $k=k(\lambda)$ as defined in Section \ref{secmulli}. Then the partition $\mu=(\lambda_1,\lambda_2,\ldots,\lambda_k)$ is $p$-regular.

\end{lemma}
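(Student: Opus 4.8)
The plan is to argue by contradiction using the diagonal hook lengths, exploiting the fact that for a self-conjugate partition these are odd and that consecutive ones run through all residues modulo the odd prime $p$. First I would record the shape of the diagonal hooks: since $\lambda=\lambda'$, for each $1\le i\le k$ we have
\[
h^\lambda_{ii}=\lambda_i+\lambda'_i-2i+1=2(\lambda_i-i)+1,
\]
and the hypothesis $\lambda\in\bg{p}{}$ says precisely that $p\nmid h^\lambda_{ii}$ for every $1\le i\le k$.

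Next I would suppose, for contradiction, that $\mu=(\lambda_1,\ldots,\lambda_k)$ fails to be $p$-regular. Because the parts are weakly decreasing, any repeated value occupies consecutive rows, so there exist an index $i$ and a value $v$ with $i+p-1\le k$ and $\lambda_i=\lambda_{i+1}=\cdots=\lambda_{i+p-1}=v$. The crucial point is that all these rows lie in the range $1\le j\le k=k(\lambda)$, so each $(j,j)$ with $i\le j\le i+p-1$ is a genuine diagonal node of $\lambda$ and the BG-condition applies to its hook. Looking at the corresponding diagonal hook lengths $h^\lambda_{jj}=2(v-j)+1$ for $j=i,\ldots,i+p-1$, as $j$ ranges over $p$ consecutive integers these quantities form an arithmetic progression of length $p$ with common difference $-2$; since $p$ is odd, $2$ is invertible modulo $p$, so these $p$ values constitute a complete residue system modulo $p$. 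Hence one of them is divisible by $p$, contradicting $p\nmid h^\lambda_{jj}$. This forces $\mu$ to be $p$-regular, the parts being nonzero since $\lambda_i\ge\lambda_k\ge k\ge 1$ for $i\le k$.

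The computations here are entirely routine, so I do not expect a genuine obstacle; the one step I would be careful to justify explicitly is that a block of $p$ equal parts of $\mu$ stays entirely within the first $k$ rows, which is exactly what guarantees that every one of the associated hooks is a \emph{diagonal} hook and therefore subject to the BG-hypothesis. Everything else reduces to the observation that $p$ consecutive odd numbers cover all residue classes modulo an odd prime.
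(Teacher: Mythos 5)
Your proof is correct and follows essentially the same route as the paper: both arguments assume $p$ equal parts among the first $k$ rows, observe that the corresponding $p$ diagonal hook lengths form an arithmetic progression with common difference $2$ (hence a complete residue system modulo the odd prime $p$), and derive a contradiction with the BG-condition. The only cosmetic difference is that you compute the hook lengths via the explicit formula $h^\lambda_{jj}=2(\lambda_j-j)+1$, while the paper derives the successive differences of $2$ directly from self-conjugacy; your explicit care that all $p$ nodes $(j,j)$ are genuine diagonal nodes of $\lambda$ is exactly the point the paper also checks.
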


\begin{example} Let $p=3$ and $\lambda=(7,4,3,2,1^3)$. This partition is self-conjugate and does not have diagonal $(3)$-hooks, that is, $\lambda \in \bg{3}{}$. Here $k=3$, so that $l(\mu)=3$, and indeed, the partition $\mu=(7,4,3)$ is $3$-regular. The following diagram illustrates both the partitions $\lambda$ and $\mu$ (in shaded boxes).
\[
\gyoung(!\fg;;;;;;;,;;;;,;;;,!\fw;;,;,;,;)
\]
\end{example}
\medskip
\begin{proof}[Proof of the lemma]
Suppose that $\mu$ is not $p$-regular. So that there exists $1 \leq i \leq k$ such that
\[
\lambda_{i}=\lambda_{i+1}=\cdots=\lambda_{i+p-1}.
\]
Since $i+p-1 \leq k$, then $\lambda_{i+p-1} \geq i+p-1$. Then $(i+p-1,i+p-1)\in [\mu] \subseteq [\lambda]$. Let $a$ be the length of the $(i+p-1,i+p-1)$-th hook of $\lambda$,  that is $a=h^\lambda_{(i+p-1,i+p-1)}$. 
Then, the length of the $(i+p-2,i+p-2)$-th hook of $\lambda$,   is $h_{(i+p-2,i+p-2)}=a+2$, since $\lambda_{i+p-1}=\lambda_{i+p-2}$ and $\lambda$ is self-conjugate. And $h_{(i+p-3,i+p-3)}=a+4$. In general $h_{(i+p-1-j,i+p-1-j)}=a+2j$ for $j=0,\ldots,p-1.$ That is, the lengths of these hooks are:
\[a,\,a+2,\,a+4,\,\ldots,\,a+2j,\,\ldots,\,a+2(p-1).
\]
But since $p\neq 2$, this list, modulo $p$, forms a complete collection of residues. Then, there exists $j\in \{0,\ldots,p-1\}$ such that $p \mid a+2j=h_{(i+p-1-j,i+p-1-j)}$, and this contradicts the fact that $\lambda \in \bg{p}{}.$

\end{proof}
In the set of BG-partitions, the implication in Lemma \ref{parite} becomes an equivalence:

\begin{lemma}
\label{lemmaepsilon}
Let $\lambda \in \bg{p}{}$. Then $a_{\lambda}^*$ is even if and only if $p \mid a^*_\lambda$.
\end{lemma}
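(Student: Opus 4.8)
The plan is to split the biconditional. One direction, ``$a^*_\lambda$ even $\Rightarrow p \mid a^*_\lambda$'', is precisely Lemma \ref{parite} and needs nothing beyond self-conjugacy; so the only new content is the converse, ``$p \mid a^*_\lambda \Rightarrow a^*_\lambda$ even'', which I would prove in contrapositive form: assuming $a^*_\lambda$ is odd, I will show $p \nmid a^*_\lambda$. By Remark \ref{parite2}, $a^*_\lambda$ being odd means exactly that $\textup{Rim}^*_p(\lambda)$ contains a diagonal node, necessarily the single node $(d,d)$ with $d = k(\lambda)$ lying on the rim; equivalently $U_\lambda \cap L_\lambda = \{(d,d)\}$ and $a^*_\lambda = 2 r^*_\lambda - 1$.

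The heart of the argument is a bookkeeping of the $p$-segments composing $U_\lambda$. Traversing the rim from top-right to bottom-left, each step moves one box down or one box left, so the quantity $j-i$ drops by exactly $1$ at every step; consequently the rim meets the diagonal only at $(d,d)$, and $U_\lambda$ is precisely the initial stretch of the $p$-rim up to and including $(d,d)$. This stretch consists of some complete $p$-segments, each lying strictly above the diagonal and hence contributing $p$ nodes (the only $p$-segment of the whole $p$-rim that can be short is the last one, which never occurs before $(d,d)$), followed by the upper part of the segment $S$ that terminates at $(d,d)$. Writing $N$ for the number of nodes of $S$ from its start down to $(d,d)$, this gives $r^*_\lambda = \#U_\lambda \equiv N \pmod p$. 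Now each $p$-segment begins at the rightmost rim node $(i_0,\lambda_{i_0})$ of its top row $i_0$, and since $j-i$ decreases by $1$ per step we get $N = \lambda_{i_0} - i_0 + 1$; as $\lambda$ is self-conjugate, the diagonal hook at $(i_0,i_0)$ has length $h^\lambda_{i_0 i_0} = 2(\lambda_{i_0}-i_0)+1 = 2N-1$.

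Combining the two displays, $a^*_\lambda = 2 r^*_\lambda - 1 \equiv 2N - 1 = h^\lambda_{i_0 i_0} \pmod p$. Since $(i_0,i_0)$ is a diagonal node and $\lambda \in \bg{p}{}$ has no diagonal $(p)$-hook, $p \nmid h^\lambda_{i_0 i_0}$, and therefore $p \nmid a^*_\lambda$; this is the desired contrapositive. I expect the main obstacle to be the middle paragraph, specifically justifying cleanly that $U_\lambda$ breaks up into full $p$-segments followed by one partial segment ending exactly at $(d,d)$, and that this last segment starts at the right end of its row so that $N$ is the arm-plus-one of the diagonal hook at $(i_0,i_0)$. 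The monotonic decrease of $j-i$ along the rim is the device that pins all of this down, and the identity $h^\lambda_{i_0 i_0} = 2N-1$ is the symmetric counterpart of the hook computation already carried out in the proof of Lemma \ref{lema1}.
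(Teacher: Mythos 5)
Your proof is correct and follows essentially the same route as the paper's: both isolate the partial $p$-segment of $U_\lambda$ containing the unique diagonal node of the $p$-rim and identify its contribution to $a^*_\lambda$ modulo $p$ with the diagonal hook length $h^\lambda_{i_0 i_0}$ at the row where that segment begins, then invoke the absence of diagonal $(p)$-hooks. The only difference is organizational: the paper argues by contradiction, showing the symmetric segment through the diagonal would have exactly $p$ nodes and coincide with a diagonal hook, whereas you phrase the same computation in contrapositive form as the congruence $a^*_\lambda \equiv h^\lambda_{i_0 i_0} \pmod{p}$.
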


\begin{proof}
As already noted, the fact that $a_{\lambda}^*$ implies $p \mid a^*_\lambda$ is proved in Lemma \ref{parite}.

Suppose that $p \mid a^*_\lambda$. If $a^*_\lambda$ is odd, then $\textup{Rim}^*_p(\lambda)$ contains a diagonal node. Then $U_\lambda$ is formed by $p$-segments of length $p$ and one last $p$-segment of length possibly less than $p$, which, in this case contains the diagonal node. Let us name $B$ the set of nodes in this last $p$-segment, and let $A$ be the set 
\[
A=B \cup \{(j,i) \in [\lambda] \mid (i,j)\in B\} \subseteq \textup{Rim}^*_p(\lambda).
\]
 The set $A$ is formed by a symmetrical segment along the rim of $[\lambda]$. See Figure \ref{centr}. 
 
 \begin{figure}[h]
  \begin{subfigure}[b]{0.4\textwidth}
  \[
\gyoung(:::::::::::j,,::;_8\hdts!\fg;!\fw,::|8\vdts:,,:i:::::::::!\fg a,::::::::::|1\vdts,:::!\fw::::!\fg;_2\hdts<>,:::::::|2\vdts:,::::::::::<A>,:::::;_1\hdts;,::!\fg;)
\]    
    \caption{Segment $A$.}
  
    \label{centr}
  \end{subfigure}
  \begin{subfigure}[b]{0.4\textwidth}
    \[
\gyoung(::::::i:::::j,,::;_8\hdts!\fg;!\fw,::|8\vdts:,,:i::::!\fgg;_4\hdts a,!\fgg:::::|4\vdts!\fg::::|1\vdts,:::!\fw::::!\fg;_2\hdts<>,:::::::|2\vdts:,::::::::::<A>,:::::!\fgg;!\fg_1\hdts;,::!\fg;)
\]
    \caption{$(i,i)_\lambda$-th hook in darker shaded boxes.}
	\label{cent}
  \end{subfigure}
  \caption{}
\end{figure}

The set $\textup{Rim}^*_p(\lambda)$ is formed by the disjoint union of $A$ and some $p$-segments. Therefore, since $p \mid a^*_\lambda$, we have that $|A|=p$.

\noindent Now, let $a=(i,j)$ be the first node of the segment $B$, that is $i=\textup{min} \{r \mid (r,s)\in B\}$ and $j=\textup{max} \{s \mid (r,s)\in B\}$. We have that the $(i,i)$-th hook contains exactly $|A|=p$ nodes. See Figure \ref{cent}.

This means that $\lambda$ has a diagonal $(p)$-hook, which is a contradiction because $\lambda \in \bg{p}{}$.

\end{proof}

\noindent We obtain the following corollary from Remark \ref{parite2} and Lemma \ref{lemmaepsilon}.

\begin{corollary}
\label{corollary:epsilon}
Let $\lambda \in \bg{p}{}$. The following statements are equivalent
\begin{enumerate}
\item $\varepsilon^*_\lambda = 0$.
\item $a^*_\lambda$ is an even integer.
\item $\textup{Rim}^*_p(\lambda)$ has no diagonal nodes.
\item $p \mid a^*_\lambda$.

\end{enumerate}
\end{corollary}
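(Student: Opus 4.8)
The plan is to assemble the corollary directly from the two preparatory results, so that essentially no new work is required beyond organizing the implications. First I would observe that a BG-partition is in particular self-conjugate, so Remark \ref{parite2} applies verbatim to $\lambda$. By the very definition of $\varepsilon^*_\lambda$ as $a^*_\lambda \bmod 2$, statement (1) is just a restatement of statement (2); and Remark \ref{parite2} already records that (2) is equivalent to (3), namely that $a^*_\lambda$ is even precisely when $\textup{Rim}^*_p(\lambda)$ contains no diagonal node. Thus the chain of equivalences $(1)\Leftrightarrow(2)\Leftrightarrow(3)$ holds for every self-conjugate partition, and in particular for $\lambda$.

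It then remains only to link this chain to (4), and this is exactly the content of Lemma \ref{lemmaepsilon}, which is stated for $\lambda \in \bg{p}{}$ and asserts that $a^*_\lambda$ is even if and only if $p \mid a^*_\lambda$; this is precisely the equivalence $(2)\Leftrightarrow(4)$. Combining the two chains closes the loop and yields the equivalence of all four statements, which is the assertion of the corollary.

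The step I would flag as the only one carrying genuine content is the implication $(4)\Rightarrow(2)$ hidden inside Lemma \ref{lemmaepsilon}: this is where the BG-hypothesis is truly used, since Lemma \ref{parite} on its own gives only $(2)\Rightarrow(4)$ for an arbitrary self-conjugate partition, and the converse fails in general (as the example $p=3$, $\lambda=(5,3,2,1,1)$ recorded just after Lemma \ref{parite} shows). Everything else is bookkeeping with the definitions, so I do not expect any serious obstacle; the proof reduces to citing Remark \ref{parite2} together with Lemma \ref{lemmaepsilon}.
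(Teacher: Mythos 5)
Your proof is correct and matches the paper exactly: the corollary is stated there as following directly from Remark \ref{parite2} (giving $(1)\Leftrightarrow(2)\Leftrightarrow(3)$ for any self-conjugate partition) together with Lemma \ref{lemmaepsilon} (giving $(2)\Leftrightarrow(4)$ under the BG-hypothesis). Your observation that the only substantive content is the implication $(4)\Rightarrow(2)$, where the BG-hypothesis is genuinely needed, is also accurate.
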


Consider a partition $\lambda \in M_p$, that is, a fixed point of the Mullineux map. This is a condition that depends only on the columns of the Mullineux symbol of $\lambda$.
Therefore, the partition $\lambda^{(1)}$ obtained by removing the $p$-rim of $\lambda$ is also a fixed point of the Mullineux map, since its Mullineux symbol is obtained by removing the first column of the Mullineux symbol of $\lambda$. The following lemma is an analogue property for partitions in $\bg{p}{}$.

\begin{lemma} 
\label{lemma:quitarrim*}
If $\lambda \in \bg{p}{}$ then $\lambda^{(1)*} \in \bg{p}{}$. In other words, if $\lambda$ is a BG-partition, then, removing its $p$-rim* results in a BG-partition.
\end{lemma}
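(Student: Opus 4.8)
The plan is to reduce the statement to a congruence between diagonal hook lengths and then read it off from the hook identities already extracted in the proof of Lemma \ref{lema1}. Write $\tilde{\lambda}=\lambda^{(1)*}$, which is self-conjugate by Remark \ref{removeprim}. Since $\lambda\in\bg{p}{}$ means precisely that $p\nmid h^\lambda_{jj}$ for every diagonal node $(j,j)$ of $\lambda$, it suffices to prove the following claim: for every diagonal node $(a,a)$ of $\tilde{\lambda}$ there is a diagonal node $(j,j)$ of $\lambda$ with $h^{\tilde{\lambda}}_{aa}\equiv h^\lambda_{jj}\pmod p$. Granting the claim, $p\nmid h^\lambda_{jj}$ forces $p\nmid h^{\tilde{\lambda}}_{aa}$, and since $a$ was arbitrary, $\tilde{\lambda}\in\bg{p}{}$.

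To prove the claim, fix a diagonal node $(a,a)$ of $\tilde{\lambda}$ (so $\tilde{\lambda}_a\ge a$ and, since $[\tilde{\lambda}]\subseteq[\lambda]$, also $(a,a)\in[\lambda]$). The nodes added in passing from $\tilde{\lambda}$ to $\lambda$ are exactly those of $\textup{Rim}^*_p(\lambda)=U_\lambda\cup L_\lambda$, so in row $a$ above the diagonal the added nodes occupy the contiguous columns $\tilde{\lambda}_a+1,\dots,\lambda_a$. I distinguish three cases by comparing $\lambda_a$, $\tilde{\lambda}_a$ and $\lambda_{a+1}$. First, if $\lambda_a=\tilde{\lambda}_a$ then row $a$ (hence, by self-conjugacy, column $a$) is untouched, and the diagonal-hook formula gives $h^\lambda_{aa}=2(\lambda_a-a)+1=2(\tilde{\lambda}_a-a)+1=h^{\tilde{\lambda}}_{aa}$, so $j=a$ works. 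Otherwise $\lambda_a>\tilde{\lambda}_a$, and the leftmost added node $(a,\tilde{\lambda}_a+1)$ lies in $U_\lambda$, in particular on the rim of $\lambda$; this forces $\lambda_{a+1}\le\tilde{\lambda}_a+1$. If $\lambda_{a+1}=\tilde{\lambda}_a+1$ (the segment ``passes over'' the diagonal between rows $a$ and $a+1$), then $(a+1,a+1)\in[\lambda]$ and the same formula yields $h^\lambda_{a+1,a+1}=2(\lambda_{a+1}-a-1)+1=2(\tilde{\lambda}_a-a)+1=h^{\tilde{\lambda}}_{aa}$, so $j=a+1$ works; this is exactly the identity established in the second bullet of the proof of Lemma \ref{lema1}.

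The remaining case is $\lambda_a>\tilde{\lambda}_a$ with $\lambda_{a+1}\le\tilde{\lambda}_a$, which is where the work lies. Here the rim cell immediately to the left of $(a,\tilde{\lambda}_a+1)$, namely $(a,\tilde{\lambda}_a)$, already belongs to $\tilde{\lambda}$ and is therefore not an added node, while $(a+1,\tilde{\lambda}_a+1)\notin[\lambda]$; hence $(a,\tilde{\lambda}_a+1)$ is the bottom-left terminal node of the $p$-segment $S$ of $U_\lambda$ containing it. Because this terminal node sits strictly above the diagonal, $S$ is not the (possibly short) diagonal-terminating last segment, so $S$ is a full block of $p$ nodes. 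Letting $(i,i)$ be the diagonal node at the top row of $S$, the identity from the first bullet of the proof of Lemma \ref{lema1} gives $h^\lambda_{ii}=2p+h^{\tilde{\lambda}}_{aa}$, whence $h^{\tilde{\lambda}}_{aa}\equiv h^\lambda_{ii}\pmod p$ and $j=i$ works. This exhausts all cases and proves the claim.

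I expect the main obstacle to be the bookkeeping in this last case: one must argue carefully that $(a,\tilde{\lambda}_a+1)$ is genuinely the terminal node of a full $p$-segment, since the Mullineux-style segmentation cuts the rim into blocks of $p$ nodes independently of the local shape of the diagram, so the identification of $S$ and of its top diagonal node $(i,i)$ must be carried out exactly as in the (admittedly technical) argument of Lemma \ref{lema1}. Once the segment containing $(a,\tilde{\lambda}_a+1)$ is correctly located and shown to be full, the hook-length identities are purely computational.
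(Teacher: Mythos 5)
Your proof is correct and follows essentially the same route as the paper's: both hinge on locating the added node $(a,\tilde{\lambda}_a+1)$ on the $p$-rim* of $\lambda$ and on the same two hook identities, $h^\lambda_{a+1,a+1}=h^{\tilde{\lambda}}_{aa}$ when the segment continues downward and $h^\lambda_{ii}=2p+h^{\tilde{\lambda}}_{aa}$ when $(a,\tilde{\lambda}_a+1)$ terminates a full $p$-segment. The only difference is presentational: the paper argues by contradiction from an assumed diagonal $(p)$-hook of $\lambda^{(1)*}$, whereas you establish the congruence $h^{\tilde{\lambda}}_{aa}\equiv h^\lambda_{jj}\ (\textup{mod}\ p)$ directly for every diagonal node, with the case split keyed to $\lambda_{a+1}$ rather than to segment-terminality.
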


\begin{proof}
Recall  (Remark \ref{removeprim}) that if $\lambda$ is self-conjugate, then so it is for $\lambda^{(1)*}$. In particular, if $\lambda \in \bg{p}{}$, then $\lambda^{(1)*}$ is self-conjugate. It remains to prove that $\lambda^{(1)*}$ does not have any diagonal $(p)$-hooks.

For simplicity of notations let $\mu=\lambda^{(1)*}$. Suppose that $\mu$ has a diagonal $(p)$-hook, say the $(i,i)_\mu$-th hook, with $h^\mu_{i,i}=pk$ for some $k \in \mathbb{N}$.

We claim that the node $(i,\mu_i+1)$ is in the $p$-rim* of $\lambda$. Indeed, $(i,\mu_i+1) \in [\lambda]$ since if $(i,\mu_i+1) \notin [\lambda]$, then $\mu_i=\lambda_i$ and $h^\lambda_{i,i}=h^\mu_{i,i}=pk$ so that $\lambda$ has a diagonal $(p)$-hook, which contradicts the fact that $\lambda\in\bg{p}{}$. Now, since $(i,\mu_i+1)\in [\lambda]\setminus [\mu]$, then $(i,\mu_i+1)\in \textup{Rim}_p^*(\lambda)$. See Figure \ref{figure:rim*bg}.

 \begin{figure}[h]
  \begin{subfigure}[b]{0.46\textwidth}
  
      \[
\gyoung(:::::::::::<\mu_i+1>,::::::i:::::\downarrow,,::;_9\hdts<>,::|9\vdts:,,:i::::!\lt;_3\hdts<>!\lno!\fg;!\fw,:::::!\lt|3\vdts,:::::;,:::::!\lno!\fg;!\fw,,::;)
\]
\caption{Shaded boxes are in $\textup{Rim}_p^*(\lambda)$.}
\label{figure:rim*bg}

  \end{subfigure}
  \begin{subfigure}[b]{0.46\textwidth}

    \[
\gyoung(::::::::::::<\mu_i+1>,:::::a:::i::::\downarrow::b,,::;_\twelve\hdts;,::|\twelve\vdts:,:a:::;_8\hdts!\fg;!\fw,::::|8\vdts::::::::!\fg|2\vdts:!\fw,,:i::::::!\lt;_2\hdts<>!\lno!\fg;_1\hdts;!\fw,:::::::!\lt|2\vdts,:::::::;!\lno,:::::::!\fg;!\fw,:::::::!\fg|1\vdts:,::::;_2\hdts<>,,::!\fw;)
\]
\caption{Shaded boxes are $p$-segments in $\textup{Rim}_p^*(\lambda)$.}
\label{figure:rim*bgsegments}

  \end{subfigure}
  \caption{}
\end{figure}

There are now two possible cases: either $(i,\mu_i+1)$ is the last node of a $p$-segment of $U_\lambda$ (the nodes on the $p$-rim* of $\lambda$ over the diagonal), or it is not the last node of the $p$-segment to which it belongs. Let us examine these two cases.

Suppose $(i,\mu_i+1)$ is the last node of a $p$-segment of $U_\lambda$, and this $p$-segment starts on a node $(a,b)$. See Figure \ref{figure:rim*bgsegments}.

Then, the $(a,a)_\lambda$-th hook has length equal to the length of the $(i,i)_\mu$-th hook plus twice the length of the $p$-segment of $\textup{Rim}_p^*(\lambda)$ containing the node $(i,\mu_i+1)$, that is 
\[h_{a,a}^\lambda=p+h_{i,i}^\mu+p=p+pk+p=p(k+2),\]

\noindent so that $\lambda$ contains diagonal $(p)$-hook, which is impossible.

Suppose now that $(i,\mu_i+1)$ is not the last node of a $p$-segment of $U_\lambda$. First, notice that the node $(i+1,\mu_i+2) \notin [\lambda]$. This is true because $(i,\mu_i+1)$ is in the $p$-rim* of $\lambda$. We claim that $(i+1,\mu_i+1)\in\textup{Rim}_p^*(\lambda) \subseteq [\lambda]$. In Figure \ref{fig:rimcroch}, node $(i+1,\mu_i+2)$ is illustrated as a cross (meaning it is not in $[\lambda]$) and node $(i+1,\mu_i+1)$ as a shaded box (as are their opposites with respect to the diagonal). 
\begin{figure}[h]
      \[
\gyoung(:::::::::::<\mu_i+1>,::::::i:::::\downarrow,,::;_9\hdts<>,::|9\vdts:,,:i::::!\lt;_3\hdts<>!\lno!\fg;!\fw,:<i+1>::::!\lt|3\vdts::::!\lno!\fg;!\fw:\equis !\lt,,,:::::;,:::::!\lno!\fg;;!\fw,:::::::\equis,::;)
\]
\caption{}
\label{fig:rimcroch}
\end{figure}
Indeed, $(i+1,\mu_i+1)\in\textup{Rim}_p^*(\lambda)$ because, since $(i,\mu_i+1)$ is not the last node of a $p$-segment, then the next node of its $p$-segment is either to the left or down. But the node to the left of $(i,\mu_i+1)$, that is, $(i,\mu_i)$ is not in the $p$-rim* of $\lambda$ since it is in $\mu$, so that the next node of this $p$-segment is $(i+1,\mu_i+1)$, which is then in $\textup{Rim}_p^*(\lambda)$.

The fact that $(i+1,\mu_i+1)\in\textup{Rim}_p^*(\lambda) \subseteq [\lambda]$ and $(i+1,\mu_i+2) \notin [\lambda]$ implies that $\lambda_{i+1}=\mu_{i}+1$ and therefore the $(i+1,i+1)_\lambda$-th hook has length
\[ h^\lambda_{(i+1,i+1)}=h^\mu_{(i,i)}=pk,
\]
that is, $\lambda$ has a diagonal $(p)$-hook, a contradiction.

We  conclude that $\mu$ does not have any diagonal $(p)$-hooks and then, $\mu=\lambda^{(1)*} \in \bg{p}{}.$
\end{proof}

\begin{proof}[Proof of Proposition \ref{prop:imagesymbol}]
Let us first state which properties characterize elements in $\mathcal{M}_p$. That is, if $\lambda \in M_p^n$ which conditions characterize its Mullineux symbol 
\[
G_p(\lambda)=
\begin{pmatrix}
a_0 & a_1 & \cdots & a_l \\
r_0 & r_1 & \cdots & r_l
\end{pmatrix}.
\]
Let $\varepsilon_i$ be as in Proposition \ref{cinco} and $s_i=a_i+\varepsilon_i-r_i$. The partition $\lambda$ is a fixed point of the Mullineux map if and only if $r_i=s_i$, that is
\[
a_i=2\,r_i-\varepsilon_i.
\]
We also know that $\lambda$ is the only $p$-regular partition whose Mullineux symbol satisfies properties (1)--(5) from Proposition \ref{cinco}. This way, the properties that characterize Mullineux symbols of partitions in $M_p$ are equivalent to the following properties

\begin{enumerate}
\item $\varepsilon_i \leq r_i-r_{i+1} < p+\varepsilon_i $ for $0 \leq i < l$,
\item $1 \leq r_l < p+\varepsilon_l$, 
\item $\sum_{i=0}^l a_i =n$, and
\item $a_i=2r_i-\varepsilon_i$.
\end{enumerate}

On the other hand, from the definition of $\varepsilon_i^*$ and Corollary \ref{corollary:epsilon}, we have that 

\[ \varepsilon^*_i =
\begin{cases}
      0 \ \text{if}\  p \mid a^*_i \\
      1 \ \text{if} \ p \nmid a^*_i 
   \end{cases}
\]

Let $\lambda \in \bg{p}{n}$. Let us see that its BG-symbol 
\[\bgs{p}(\lambda)=
\begin{pmatrix}
a^*_0 & a^*_1 & \cdots & a^*_l \\
r^*_0 & r^*_1 & \cdots & r^*_l
\end{pmatrix}
\] is in $\mathcal{M}_p$ by verifying properties (1)--(4) for $a_i^*$, $\varepsilon^*_i$ and $r^*_i$ : 

From the definition of the sequence $a^*_0,\ldots,a^*_l$, it is clear that (3) holds. We have that (4) is satisfied from Remark \ref{parite2}. Let us first show that (2) holds.
Since $\lambda^{(l)*}$ is not the empty partition, $r^*_l \geq 1$. On the other hand, the partition $\lambda^{(l)*}$ is a hook and it is self-conjugate; more precisely $\lambda^{(l)*}=(r^*_l,1^{a^*_l-r^*_l})$. Then $a^*_l=|\lambda^{(l)*}|$ is odd, so that $\varepsilon^*_l=a^*_l\ \textup{mod} \ 2=1$. Suppose that $r^*_l \geq p+\varepsilon^*_l=p+1$. This means that the first $p$-segment over the diagonal of $\lambda^{(l)*}$ consists of $p$ nodes. But then, there are more nodes remaining in the first row of $[\lambda^{(l)*}]$ that are not in the $p$-rim*, but this contradicts the maximality of $l$. 

 It remains to prove (1). A key element for this task is Lemma \ref{lemma:supregular}, which roughly says that truncating a BG-partition to some particular row results in a $p$-regular partition. The idea is to use the fact that this truncated partition, being $p$-regular, satisfies properties from Proposition \ref{cinco}, which uses numbers from the Mullineux symbol, and these will give us information about $r^*_i$ and $\varepsilon^*_i$, which are numbers appearing in the BG-symbol.

Let us see that $\lambda$ satisfies
\[\varepsilon^*_i \leq r^*_i-r^*_{i+1} < p+\varepsilon^*_i  \ \ \text{for}\ \  0 \leq i < l.
\]

It suffices to prove this for $i=0$ and then, the property follows recursively by Lemma \ref{lemma:quitarrim*}.

\noindent To simplify notation, set: 

\begin{center}
\begin{tabular}{c|c} 

 values associated to $\lambda$  & values associated to $\lambda^{(1)*}$ \\ 
 \hline
  $a:= a^*_0$ &  $a':= a^*_1$ \\ 
 $r:=r^*_0$ &  $r':=r^*_1$\\ 
  $\varepsilon:=\varepsilon^*_0$ & $\varepsilon':=\varepsilon^*_1$ \\ 

\end{tabular}
\end{center}

\noindent Let us prove that 
\[
\varepsilon \leq r-r' < p+\varepsilon.
\]
We study the four possible cases for the values of $\varepsilon$ and $\varepsilon'$, namely

\begin{center}
\begin{tabular}{ccc} 

  & $\varepsilon$ & $\varepsilon'$ \\ 
 \hline
 (i) & $0$ & $0$ \\ 
 (ii) & $0$ & $1$ \\ 
  (ii) & $1$ & $0$ \\ 
   (iv) & $1$ & $1$ \\ 

\end{tabular}
\end{center}

In each case we will consider some diagram
\[
[\underline{\lambda}]:=\{(i,j)\in [\lambda] \mid i\leq k(\lambda) \text{ and } j\geq k(\lambda)-x+1\},
\]
for a certain $1\leq x\leq k(\lambda)$ (which will be chosen depending on the case). That is, $[\underline{\lambda}]$ is obtained from $[\lambda]$ by removing rows below row $k(\lambda)$ and columns up to column $k(\lambda)-x$. In an abuse of notation we will call $\underline{\lambda}$ the partition with Young diagram obtained by shifting the diagram $[\underline{\lambda}]$ to the left by $k(\lambda)-x$ columns. This will allow us to identify nodes of $\underline{\lambda}$ and nodes of $\lambda$ (for example $(i,\lambda_i)$, and not $(i,\lambda_i-k(\lambda)+x)$, will be the last node on row $i$ of $\underline{\lambda}$ for $1\leq i \leq k(\lambda)$).

In each case we denote $\underline{a}$ the number of nodes in $\textup{Rim}_p(\underline{\lambda})$, $\underline{r}$ the length of $\underline{\lambda}$ and 
\[ \underline{\varepsilon} =
\begin{cases}
      0 \ \text{if}\  p \mid \underline{a}, \\
      1 \ \text{if} \ p \nmid \underline{a}.
   \end{cases}
\]
 And for $\underline{\lambda}^{(1)}$, similar notation with primes: $\underline{a}'$ the number of nodes in $\textup{Rim}_p(\underline{\lambda}^{(1)})$, $\underline{r}'$ the length of $\underline{\lambda}^{(1)}$ and 
\[ \underline{\varepsilon'} =
\begin{cases}
      0 \ \text{if}\  p \mid \underline{a}', \\
      1 \ \text{if} \ p \nmid \underline{a}'.
   \end{cases}
\]

Let us now consider each of the four cases.
\medskip

\begin{enumerate}
\item[(i)] In this case, the fact that both $\varepsilon$ and $\varepsilon'$ are zero means that neither $\lambda$ nor $\lambda^{(1)*}$ contain diagonal nodes on their $p$-rims*. For example as in the partition $(6,5,2^3,1)$ with $p=3$. In Figure \ref{fig:ex1rim} we represent the $3$-rims* of $(6,5,2^3,1)$ and $(6,5,2^3,1)^{(1)*}$ in different shades.

\begin{figure}[h]
  \begin{subfigure}[b]{0.48\textwidth}
\[
\gyoung(;;;!\fgg;!\fg;;!\fw,;;!\fgg;;!\fg;!\fw,;!\fgg;,;;,!\fg;;,;)
\]
\caption{$\textup{Rim}^*_3((6,5,2^3,1))$ and $\textup{Rim}^*_3((6,5,2^3,1)^{(1)*}).$}
\label{fig:ex1rim}
 
  \end{subfigure}
  \begin{subfigure}[b]{0.48\textwidth}
\[
\gyoung(;!\lt;;!\fgg;!\fg;;!\lno!\fw,;!\lt;!\fgg;;!\fg;!\lno!\fw,;!\fgg;,;;,!\fg;;,;)
\]
\caption{Partition $\underline{(6,5,2^3,1)}$ in thicker lines.}
\label{fig:ex1rimsub}
	\end{subfigure}
	\caption{}
\end{figure}

Let $x=1$ in this case. Figure \ref{fig:ex1rimsub} illustrates $\underline{(6,5,2^3,1)}$. Lemma \ref{lemma:supregular} ensures that $\underline{\lambda}$ is $p$-regular, then, from Proposition \ref{cinco}, we have 
\begin{equation}
\label{eq:1cinco3}
\underline{r}-\underline{r}'+ \underline{\varepsilon}' \leq \underline{a}-\underline{a}' < p+ \underline{r}-\underline{r}'+\underline{\varepsilon}'.
\end{equation}
Notice that the nodes in $\textup{Rim}_p^*(\lambda)$ over the diagonal of $\lambda$ are exactly the nodes of $\textup{Rim}_p(\underline{\lambda})$, that is, $U_\lambda=\textup{Rim}_p(\underline{\lambda})$. Hence $\#\,U_\lambda=\#\,\textup{Rim}_p(\underline{\lambda})$. That is $r=\underline{a}$. Similarly, $U_{\lambda^{(1)*}}=\textup{Rim}_p(\underline{\lambda}^{(1)})$, since $\varepsilon'=0$, meaning that node $(k(\lambda)-1,k(\lambda)-1) \notin \textup{Rim}_p^*(\lambda^{(1)*})$ so that this node is not in $\textup{Rim}_p(\underline{\lambda}^1)$, either. Hence $r'=\underline{a}'$. 

We claim that $\textup{Rim}_p(\underline{\lambda})$ and $\textup{Rim}_p(\underline{\lambda}^{(1)})$ end at the same row; row $k(\lambda)$. This is not obvious since it could be possible that the $p$-rim of a partition $\mu$, which always contains nodes in the last row of $\mu$, row $l(\mu)$, contains every node in this last row, and then $\mu^{(1)}$ does not have any nodes in row $l(\mu)$. But in  our setting, this is not the case. Indeed, by definition, every node of a partition is in some $i$-th $p$-rim of the partition. In particular, the diagonal node $(k(\lambda),k(\lambda))$ is in the $j$-th $p$-rim of $\underline{\lambda}$ for some $j > 1$ since $\textup{Rim}_p(\underline{\lambda})$ and $\textup{Rim}_p(\underline{\lambda}^{(1)})$ do not contain diagonal nodes. On the other hand the $p$-rim of any partition contains nodes in the last row of the partition and since $k(\lambda)$ is the last row of both $\underline{\lambda}$ and $\underline{\lambda}^{(j)}$, then it is also the last row of $\underline{\lambda}^{(1)}$. So that both $\underline{\lambda}$ and $\underline{\lambda}^{(1)}$ contain nodes in row $k(\lambda)$.

Now, the fact that $\textup{Rim}_p(\underline{\lambda})$ and $\textup{Rim}_p(\underline{\lambda}^{(1)})$ end at the same row means that $l(\underline{\lambda})=k(\lambda)=l(\underline{\lambda}^{(1)})$, that is $\underline{r}-\underline{r}'=0$. 

On the other hand, in this case, we have that $\varepsilon'=0$, which means that $p\mid a'$. But since $a'=2r'-\varepsilon'$, then $p \mid r'=\underline{a}'$ ($p \neq 2$), which means that $\underline{\varepsilon}'=0.$

The fact that $\underline{r}-\underline{r}'=0$, together with the fact that $\underline{a}=r$, $\underline{a}'=r'$ and $\underline{\varepsilon}=0$, make Equation \ref{eq:1cinco3} become
\[
0 \leq r- r'<p+0.
\]
So that $\varepsilon \leq r- r'<p+\varepsilon$, as we wanted to show, since in this case $\varepsilon=0$.
\vspace{.5cm}

\item[(ii)] Suppose that $\varepsilon=0$ and $\varepsilon'=1$. For example as in the partition $(7,5,2^3,1^2)$ with $p=3$. In Figure \ref{fig:ex2rim} we represent the $3$-rims* of $(7,5,2^3,1^2)$ and $(7,5,2^3,1^2)^{(1)*}$ in different shades.

\begin{figure}[h]
  \begin{subfigure}[b]{0.48\textwidth}
\[
\gyoung(;!\fgg;;;!\fg;;;,!\fgg;;!\fg;;;,!\fgg;!\fg;,!\fgg;!\fg;,;;,;,;)
\]
\caption{$\textup{Rim}^*_3((7,5,2^3,1^2))$ and $\textup{Rim}^*_3((7,5,2^3,1^2)^{(1)*}).$}
\label{fig:ex2rim}
 
  \end{subfigure}
  \begin{subfigure}[b]{0.48\textwidth}
\[
\gyoung(;!\lt!\fgg;;;!\fg;;;,!\lno!\fgg;!\lt;!\fg;;;!\lno,!\fgg;!\fg;,!\fgg;!\fg;,;;,;,;)
\]
\caption{Partition $\underline{(7,5,2^3,1^2)}$ in thicker lines.}
\label{fig:ex2rimsub}
	\end{subfigure}
	\caption{}
\end{figure}

As in the previous case, let $x=1$. We illustrate $\underline{(7,5,2^3,1^2)}$ by thicker lines in Figure \ref{fig:ex2rimsub}.

Let us see that also in this case we have that $\underline{r}-\underline{r}'=0$. As before, the nodes in $\textup{Rim}_p^*(\lambda)$ over the diagonal of $\lambda$, or $U_\lambda$, are exactly the nodes of $\textup{Rim}_p(\underline{\lambda})$. And we also have that $U_{\lambda^{(1)*}}=\textup{Rim}_p(\underline{\lambda}^{(1)})$. So that $r=\underline{a}$ and $r'=\underline{a}'$. On the other hand, since in this case $(k(\lambda),k(\lambda))\in \textup{Rim}^*_p(\lambda^{(1)*})$, then $(k(\lambda),k(\lambda))\in \textup{Rim}_p(\underline{\lambda}^{(1)})$.
Furthermore, the fact that $\textup{Rim}^*_p(\lambda^{(1)*})$ has a node on row $k(\lambda)$, implies that $\underline{\lambda}^{(1)}$ has a node on row $k(\lambda)$, and then so it is for $\underline{\lambda}$. Therefore $l(\underline{\lambda})=k(\lambda)=l(\underline{\lambda}^{(1)})$, then
$\underline{r}-\underline{r}'=0$.

Now, consider the two possible cases for $\underline{\varepsilon}'$. If $\underline{\varepsilon}'=0$, we obtain, as in the previous case 
\[
0 \leq r- r'<p+0,
\]
which is what we wanted to show. If $\underline{\varepsilon}'=1$, Equation \ref{eq:1cinco3} becomes 
\[
1 \leq r-r' < p+1,
\]
In particular  $0 \leq r-r' \leq p$. But actually, $r-r'<p$. Indeed, if $r-r'=p$, since $p \mid a=2r$, then $p \mid r$ and therefore $p \mid r' = \underline{a}'$, which contradicts the fact that $\underline{\varepsilon}'=1$. In conclusion $0 \leq r-r' < p$, which ends this case.
\vspace{.5cm}

\item[(iii)] Suppose that $\varepsilon=1$ and $\varepsilon'=0$. For example as in the partition $(6,5^2,3^2,1)$ with $p=3$. In Figure \ref{fig:ex3rim} we represent the $3$-rims* of $(6,5^2,3^2,1)$ and $(6,5^2,3^2,1)^{(1)*}$ in different shades.

\begin{figure}[h]
  \begin{subfigure}[b]{0.48\textwidth}
\[
\gyoung(;;;!\fgg;!\fg;;,!\fw;;!\fgg;;!\fg;,!\fw;!\fgg;!\fg;;;,!\fgg;;!\fg;,;;;,;)
\]
\caption{$\textup{Rim}^*_3((6,5^2,3^2,1))$ and $\textup{Rim}^*_3((6,5^2,3^2,1)^{(1)*}).$}
\label{fig:ex3rim}
 
  \end{subfigure}
  \begin{subfigure}[b]{0.48\textwidth}
\[
\gyoung(;;!\lt;!\fgg;!\fg;;!\lno,!\fw;;!\lt!\fgg;;!\fg;!\lno,!\fw;!\fgg;!\fg!\lt;;;!\lno,!\fgg;;!\fg;,;;;,;)
\]
\caption{Partition $\underline{(6,5^2,3^2,1)}$ in thicker lines.}
\label{fig:ex3rimsub}
	\end{subfigure}
	\caption{}
\end{figure}

As before, let $x=1$. We illustrate $\underline{(6,5^2,3^2,1)}$ by thicker lines in Figure \ref{fig:ex3rimsub}. Let us see that in this case $\underline{r}-\underline{r}'=1$.

As in the preceding cases, the nodes in $\textup{Rim}_p^*(\lambda)$ over the diagonal of $\lambda$ are exactly the nodes of $\textup{Rim}_p(\underline{\lambda})$. This fact implies that $\underline{a}=r$, and since $\varepsilon'=0$, by the same argument that in case (i), we have that $\underline{a}'=r'$.  Now, since $\varepsilon=1$, the last diagonal node of $\lambda$, that is, the node $(k(\lambda),k(\lambda))$ is in $\textup{Rim}^*_p(\lambda)$. In particular $ (k(\lambda),k(\lambda))\in \textup{Rim}_p(\underline{\lambda})$, and since it is the first node of the last row of $\underline{\lambda}$, that means that all nodes on this last row are in the $p$-rim of $\underline{\lambda}$. So that this last row $k(\lambda)=\underline{r}$ of $\underline{\lambda}$ does not have any nodes from $\textup{Rim}_p^*(\lambda^{(1)*})$ (or $\textup{Rim}_p(\underline{\lambda}^{(1)})$). We claim that
row $k(\lambda)-1$ in $\underline{\lambda}$ contains at least one node in $\textup{Rim}_p^*(\lambda^{(1)*})$ (or $\textup{Rim}_p(\underline{\lambda}^{(1)})$). Indeed, node $(k(\lambda)-1,k(\lambda)-1)$ is in $\textup{Rim}_p^*(\lambda^{(j)*})$ for a $j>1$, since it is not in $\textup{Rim}_p^*(\lambda^{(1)*})$ (because $\varepsilon'=0$). If we suppose that row $k(\lambda)-1$ does not have node in $\textup{Rim}_p^*(\lambda^{(1)*})$, we are supposing that to the left of node $(k(\lambda)-1,k(\lambda)-1)$ there are only nodes from $\textup{Rim}_p^*(\lambda)$. If this is the case, the last node in row $k(\lambda)-1$ in $\lambda^{(1)*}$ is $(k(\lambda)-1,k(\lambda)-1)$, that is: $\lambda^{(1)*}_{k(\lambda)-1}=k(\lambda)-1$. But the last node on every row (over or on the diagonal) belongs to the $p$-rim*. In this case, node $(k(\lambda)-1,k(\lambda)-1)$ belongs to the $p$-rim* of $\lambda^{(1)*}$, a contradiction since $\lambda^{(1)*}$ does not have any diagonal nodes on its $p$-rim*. In conclusion, row $k(\lambda)-1$ in $\underline{\lambda}$ contains at least one node in $\textup{Rim}_p^*(\lambda^{(1)*})$, in particular, row $k(\lambda)-1$ in $\underline{\lambda}$ contains at least one node in $\underline{\lambda}^{(1)}$, so that $l(\underline{\lambda}^{(1)})=k(\lambda)$. Therefore $\underline{r}-\underline{r'}=k(\lambda)-l(\underline{\lambda}^{(1)})=1$.

On the other hand, since we have that $\varepsilon'=0$, by the same argument as in case (i), we have that $\underline{\varepsilon'}=0$.

Puting all together in Equation \ref{eq:1cinco3}, we obtain
\[
1 \leq r-r' < p+ 1.
\]
That is, $\varepsilon \leq r-r' < p+ \varepsilon$.
\vspace{.5cm}

\item[(iv)] Suppose finally that $\varepsilon=\varepsilon'=1$. An example is given by partition $(7,4,3,2,1^3)$ for $p=3$. In Figure \ref{fig:ex4rim} we represent the $3$-rims* of $(7,4,3,2,1^3)$ and $(7,4,3,2,1^3)^{(1)*}$ in different shades.

Let $x=2$. Lemma \ref{lemma:supregular} still assures that $\underline{\lambda}$ is $p$-regular. And from the way that it is defined, $\underline{\lambda}$ contains both diagonal nodes in $\textup{Rim}_p^*(\lambda)$ and $\textup{Rim}_p^*(\lambda^{(1)*})$. We illustrate $\underline{(7,4,3,2,1^3)}$ by thicker lines in Figure \ref{fig:ex4rimsub}.

\begin{figure}[h]
  \begin{subfigure}[b]{0.48\textwidth}
\[
\gyoung(;!\fgg;;;!\fg;;;,!\fgg;;!\fg;;,!\fgg;!\fg;;,!\fgg;!\fg;,;,;,;)
\]
\caption{$\textup{Rim}^*_3((7,4,3,2,1^3))$ and $\textup{Rim}^*_3((7,4,3,2,1^3)^{(1)*}).$}
\label{fig:ex4rim}
 
  \end{subfigure}
  \begin{subfigure}[b]{0.48\textwidth}
\[
\gyoung(;!\lt!\fgg;;;!\fg;;;!\lno,!\fgg;!\lt;!\fg;;!\lno,!\fgg;!\lt!\fg;;!\lno,!\fgg;!\fg;,;,;,;)
\]
\caption{Partition $\underline{(7,4,3,2,1^3)}$ in thicker lines.}
\label{fig:ex4rimsub}
	\end{subfigure}
	\caption{}
\end{figure}

Notice that in this case it is not necessarily true that $\underline{a}=r$ and $\underline{a}'=r'$. Since $\underline{\lambda}$ contains the node $(k(\lambda),k(\lambda)-1)$ which is under the diagonal of $\lambda$, where the $p$-rim* does not behave as the $p$-rim. For the partition $(7,4,3,2,1^3)$, this node is the node $(3,2)$, which in this case is in the $p$-rim* of $(7,4,3,2,1^3)$. But it could be the case that the node $(k(\lambda),k(\lambda)-1)$ is not in the $p$-rim* of $\lambda$ but in the $p$-rim* of $\lambda^{(1)*}$. This depends on the divisibility of $r$ by $p$.

Recall that $r=\#\,U_\lambda$ is the number of nodes in the $p$-rim* of $\lambda$ that are above (or on) the diagonal of $\lambda$. Let us consider the two cases: $p \mid r$ and $p \nmid r$.

\begin{itemize}
\item Suppose that $p \mid r$. As in $\lambda=(7,4,3,2,1^3)$ with $p=3$, see Figure \ref{fig:ex4rim}. In this case every $p$-segment of the $p$-rim* of $\lambda$ contains exactly $p$ nodes. In particular the segment which contains the node $(k(\lambda),k(\lambda))$. And since this node is the last (and $p$-th) node of this $p$-segment, then the node to its left $ (k(\lambda),k(\lambda)-1)$ is not in the $p$-rim of $\underline{\lambda}$. And we have $\underline{a}=r$ and $\underline{r}-\underline{r}'=0$. Moreover, the node $ (k(\lambda),k(\lambda)-1)$ is then in the $p$-rim of $\underline{\lambda}^{(1)}$. So that $\underline{a}'=r'+1$ (the $r'$ nodes of $\textup{Rim}_p^*(\lambda^{(1)*})$ above the diagonal, together with node $ (k(\lambda),k(\lambda)-1)$, form $\textup{Rim}_p(\underline{\lambda}^{(1)})$). Therefore we have $\underline{a}=r$, $\underline{a}'=r'+1$ and $\underline{r}-\underline{r}'=0$. Puting this together in Equation \ref{eq:1cinco3} we get
\[
 \underline{\varepsilon}' \leq r-(r'+1) < p+\underline{\varepsilon}',
\]
or
\[
 \underline{\varepsilon}'+1 \leq r-r' < p+\underline{\varepsilon}'+1.
\]
But $\underline{\varepsilon}'+1 \geq 1 = \varepsilon$. Therefore we have 
\[
1 \leq r-r' < p+\underline{\varepsilon}'+1.
\]
Let us see that $r-r' < p+1=p+\varepsilon$. There are two possibilities for $\underline{\varepsilon}'$. Either $\underline{\varepsilon}'=0$, in which case $r-r' < p+1$, or  $\underline{\varepsilon}'=1$. If $\underline{\varepsilon}'=1$, we have $r-r' < p+2$, so that $r-r' \leq p+1$. But in fact $r-r' < p+1$, since if $r-r' = p+1$, then $r-(r'+1) = p$. But in this case $p\mid r$, therefore, $p \mid r'+1=\underline{a}'$, and this contradicts the fact that $\underline{\varepsilon}'=1$.
\medskip

\item Suppose that $p \nmid r$. As in $\lambda=(6,2,1^4)$ with $p=3$, see Figure \ref{fig:ex5rim}. In this case, the $p$-segment of $\textup{Rim}_p^*(\lambda)$ which contains the node $(k(\lambda),k(\lambda))$ has less than $p$ nodes. This implies that the node to the left of this diagonal node, namely $(k(\lambda),k(\lambda)-1)$, is on the $p$-rim of $\underline{\lambda}$. Then $\underline{a}=r+1$ (the $r$ nodes of $\textup{Rim}_p^*(\lambda)$ above the diagonal, together with node $ (k(\lambda),k(\lambda)-1)$, form $\textup{Rim}_p(\underline{\lambda})$), and we also have that $\underline{r}-\underline{r}'=1$ and $\underline{a}'=r'$. Equation \ref{eq:1cinco3} gives
\[
1+ \underline{\varepsilon}' \leq (r+1)-r' < p+ 1+\underline{\varepsilon}',
\]
or
\[
\underline{\varepsilon}' \leq r-r' < p+\underline{\varepsilon}'.
\]
But $p+\underline{\varepsilon}' \leq p+1=p+\varepsilon$. Then $\underline{\varepsilon}' \leq r-r' < p+\varepsilon$. Let us show that $r-r' \geq 1= \varepsilon$. There are two possibilities for $\underline{\varepsilon}'$. Either $\underline{\varepsilon}'=1$, in which case $r-r' \geq 1= \varepsilon$ or  $\underline{\varepsilon}'=0$. If $\underline{\varepsilon}'=0$, then $r-r' \geq 0$. But actually  $r-r' \geq 1$, since if $r-r'= 0$, from the fact that $p \mid \underline{a}'=r'$ we would have that $p \mid r$, a contradiction.

\begin{figure}[h]
  \begin{subfigure}[b]{0.47\textwidth}
\[
\gyoung(!\fgg;;;!\fg;;;,!\fgg;!\fg;,!\fgg;,!\fg;,;,;)
\]
\caption{Partition $(6,2,1^4)$, $\textup{Rim}^*_3((6,2,1^4))$, and $\textup{Rim}^*_3((6,2,1^4)^{(1)*}).$}
\label{fig:ex5rim}
 
  \end{subfigure}
  \begin{subfigure}[b]{0.47\textwidth}
  \[
\gyoung(!\lt!\fgg;;;!\fg;;;,!\fgg;!\fg;!\lno,!\fgg;,!\fg;,;,;)
\]
\caption{Partition $\underline{(6,2,1^4)}$ in thicker lines.}
\label{fig:ex5rimsub}
	\end{subfigure}
	\caption{}
\end{figure}

\end{itemize}

\end{enumerate}

\end{proof}

\section{From self-Mullineux partitions to BG-partitions}\label{sect:surj}

From Theorem \ref{th:main}, we know that the notion of BG-symbol induces an algorithm for the correspondence between BG-partitions and self-Mullineux partition, since it defines an injective mapping between sets of the same cardinality. Then we know that to each Mullineux symbol of a self-Mullineux partition, corresponds a unique BG-partition. Moreover, from the definition of the BG-symbol, and Lemma \ref{lema1}, we know how to find the BG-partition associated to such a Mullineux symbol under this correspondence.
In this section we prove that this inverse algorithm is well defined, that is, we prove that applying it to a Mullineux symbol of a self-Mullineux partition results in a BG-partition. This confirms Theorem \ref{th:main} in a combinatorial way without using the fact that $\#\bg{p}{n}=\#M_p^n$.

\begin{proposition}\label{surjective} Let $p$ be an odd prime and $\lambda$ a self-Mullineux partition. The Mullineux symbol of $\lambda$, $G_p(\lambda)$ is the BG-symbol of some BG-partition. That is
\[
\mathcal{M}_p \subseteq \bgs{p}(\bg{p}{}).
\]
\end{proposition}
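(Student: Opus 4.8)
The plan is to argue by induction on the length $l$ of the Mullineux symbol, reconstructing a BG-partition one $p$-rim* at a time and invoking Lemma \ref{lema1} at each step. First I would record the characterization of $\mathcal{M}_p$ established inside the proof of Proposition \ref{prop:imagesymbol}: an array $\binom{a_0\ \cdots\ a_l}{r_0\ \cdots\ r_l}$ lies in $\mathcal{M}_p$ precisely when, writing $\varepsilon_i=0$ if $p\mid a_i$ and $\varepsilon_i=1$ otherwise, one has (1) $\varepsilon_i\le r_i-r_{i+1}<p+\varepsilon_i$ for $0\le i<l$, (2) $1\le r_l<p+\varepsilon_l$, and (4) $a_i=2r_i-\varepsilon_i$. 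For the base case $l=0$ the symbol is $\binom{a_0}{r_0}$; condition (2) forces $1\le r_0<p+\varepsilon_0$, and if $\varepsilon_0=0$ then $p\mid a_0=2r_0$ would give $p\mid r_0$, contradicting $1\le r_0<p$. Hence $\varepsilon_0=1$, so $p\nmid a_0=2r_0-1$, and the self-conjugate hook $(r_0,1^{r_0-1})$ is a BG-partition whose BG-symbol is exactly $\binom{a_0}{r_0}$.

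For the inductive step I would delete the first column to obtain $\binom{a_1\ \cdots\ a_l}{r_1\ \cdots\ r_l}$, which still satisfies conditions (1), (2), (4) and is therefore again the Mullineux symbol of a self-Mullineux partition. By the inductive hypothesis there is a BG-partition $\tilde\lambda\in\bg{p}{}$ with $\bgs{p}(\tilde\lambda)=\binom{a_1\ \cdots\ a_l}{r_1\ \cdots\ r_l}$; in particular $r^*_{\tilde\lambda}=r_1$. I then apply Lemma \ref{lema1} with this $\tilde\lambda$, with $\varepsilon=\varepsilon_0$, and with $m$ the residue of $r_0-\varepsilon_0$ modulo $p$. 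The hypotheses of the lemma must be checked: if $\varepsilon_0=0$ then $p\mid a_0=2r_0$ forces $p\mid r_0$, so $m\equiv r_0\equiv 0$, as required; and if $\varepsilon_0=1$ then $2m+1\equiv 2r_0-1=a_0\not\equiv 0\pmod p$, so $p\nmid 2m+1$. Thus Lemma \ref{lema1} produces a unique self-conjugate partition $\nu$ with $\nu^{(1)*}=\tilde\lambda$, $\varepsilon^*_\nu=\varepsilon_0$ and $r^*_\nu-\varepsilon^*_\nu\equiv r_0-\varepsilon_0\pmod p$, and moreover guarantees $\nu\in\bg{p}{}$.

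It then remains to show that the first column $\binom{a^*_\nu}{r^*_\nu}$ of $\bgs{p}(\nu)$ equals $\binom{a_0}{r_0}$; since the remaining columns already agree via $\nu^{(1)*}=\tilde\lambda$, this closes the induction. From $\varepsilon^*_\nu=\varepsilon_0$ and condition (ii) of Lemma \ref{lema1} I get only the congruence $r^*_\nu\equiv r_0\pmod p$. To upgrade it to equality I would invoke Proposition \ref{prop:imagesymbol}, now applicable because $\nu\in\bg{p}{}$: it yields $\bgs{p}(\nu)\in\mathcal{M}_p$, so its first column obeys condition (1), namely $r^*_\nu-r_1\in[\varepsilon_0,\,p+\varepsilon_0)$. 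The original symbol likewise satisfies $r_0-r_1\in[\varepsilon_0,\,p+\varepsilon_0)$. Two integers lying in a common half-open interval of length $p$ and congruent modulo $p$ must coincide, so $r^*_\nu=r_0$, whence $a^*_\nu=2r^*_\nu-\varepsilon^*_\nu=2r_0-\varepsilon_0=a_0$.

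The hard part is precisely this last pinning-down step: Lemma \ref{lema1} determines $\nu$ only from the residue $m$ modulo $p$, so a priori $r^*_\nu$ could differ from $r_0$ by a nonzero multiple of $p$. The crux is to observe that the defining inequalities of $\mathcal{M}_p$ confine both $r^*_\nu-r_1$ and $r_0-r_1$ to a single interval of length $p$, turning the congruence into an equality. Everything else is the routine verification of the hypotheses of Lemma \ref{lema1} and the bookkeeping of columns; one should only take care that the construction does not accidentally lengthen the symbol, which is ruled out because $\nu^{(1)*}=\tilde\lambda$ has a BG-symbol of length $l-1$.
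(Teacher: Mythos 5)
Your proposal is correct and follows essentially the same route as the paper's proof: induction on the length of the symbol, reconstruction of the first column via Lemma \ref{lema1}, and the upgrade of the congruence $r^*_\nu\equiv r_0\pmod p$ to an equality by trapping both $r^*_\nu-r_1$ and $r_0-r_1$ in the length-$p$ interval $[\varepsilon_0,p+\varepsilon_0)$ coming from Proposition \ref{cinco} and Proposition \ref{prop:imagesymbol}. The only cosmetic difference is in the base case, where you derive $\varepsilon_0=1$ directly from the inequality $1\le r_0<p+\varepsilon_0$ rather than from the $p$-regularity of the hook as the paper does; both arguments are valid.
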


\begin{proof}

We give a combinatorial proof of this fact, although it follows also directly from Proposition \ref{injective} and Proposition \ref{prop:imagesymbol}. 

 We proceed by induction on $l$, the length of the Mullineux symbol. 

Let $l=0$ and $S= \begin{pmatrix}
a_l \\ r_l
\end{pmatrix} \in \mathcal{M}_p$, that is, $S=G_p(\lambda)$ for some $\lambda \in M_p$. Let 
$\varepsilon_l=0$ if $p\mid a_l$ and $\varepsilon_l=1$ otherwise. 
Since $S$ has exactly one column, then $\lambda=\lambda^{(l)}$ is a hook, that is $\lambda$ is of the form $\lambda=(x,1^y)$, with $x\leq p$. On the other hand, since $\lambda$ is fixed by the Mullineux map, we know that
\[
a_l=2r_l-\varepsilon_l.
\]
We claim that $\varepsilon_l=1$. If $\varepsilon_l=0$, that is, if $p \mid a_l$, then the $p$-segments that form $\textup{Rim}_p(\lambda)=[\lambda]$ are all of length exactly $p$. We know that $\lambda$ is a $p$-regular hook, this means that $\lambda$ is formed by exactly one $p$-segment. If there was more than one $p$-segment, then $a_l>p$ (so that $a_l=2kp$ for some $k\geq1$) it follows that $y\geq p$, and then $\lambda$ would not be $p$-regular. Thus $a_l=p=2r_l$. But this is not possible since $p$ is odd. Then $\varepsilon_l=1$ and $a_l=2r_l-1$.

The partition $\mu=(r_l,1^{r_l-1})$ is self-conjugate, and is a hook of length $2r_l-1=a_l$. Since $p \nmid a_l$, then $\mu \in \bg{p}{}$. Its BG-symbol is
\[
\bgs{p}{}(\mu)= \begin{pmatrix}
2r_l-1 \\ r_l
\end{pmatrix} = \begin{pmatrix}
a_l \\ r_l 
\end{pmatrix} = S.
\]
In fact $\mu=\lambda$.

Consider now $l>0$. Let 
\[ S=
\begin{pmatrix}
a_0 & a_1 & \cdots & a_l \\
r_0 & r_1 & \cdots & r_l
\end{pmatrix}
\]
be a symbol in $\mathcal{M}_p$ corresponding to a partition $\lambda$ in $M_p$. Consider the array 
\[ \tilde{S}=
\begin{pmatrix}
 a_1 & \cdots & a_l \\
 r_1 & \cdots & r_l
\end{pmatrix}.
\]
By definition, $\tilde{S}$ is the Mullineux symbol of the partition $\lambda^{(1)}$, obtained from $\lambda$ by removing the nodes on the $p$-rim. We know that $\lambda^{(1)}$ is fixed by the Mullineux map, given that this only depends on the columns of the symbol. Then $\lambda^{(1)} \in M_p$, and $\tilde{S}\in \mathcal{M}_p$. By induction, there exists a partition $\tilde{\mu} \in \bg{p}{}$ such that 
\[
\bgs{p}{}(\tilde{\mu})=\tilde{S}.
\]

We will apply Lemma \ref{lema1}. Let $\varepsilon_0=0$ if $p \mid a_0$, or $\varepsilon_0=1$ otherwise. Let $m=(r_0-\varepsilon_0) \ \textup{mod}\ p$.

Suppose that $\varepsilon_0=0$ and let us see that in this case $m=0$. Since  $\varepsilon_0=0$, then $p \mid a_0$. But $a_0=2r_0-\varepsilon_0=2r_0$, since $\lambda$ is a fixed point of the Mullineux map. Now, since $p$ is odd, then $p \mid r_0$ so that $m=(r_0-\varepsilon_0)\ \textup{mod}\ p =r_0\ \textup{mod}\ p =0$.

If $\varepsilon_0=1$, we have that $p \nmid a_0$. Therefore $p \nmid 2m+1$ since $2m+1 \equiv 2(r_0-\varepsilon_0)+1 \ \textup{(mod }p)$ and $2(r_0-\varepsilon_0)+1=2r_0-\varepsilon_0=a_0.$

Lemma \ref{lema1} implies that there exists a unique self-conjugate partition $\mu \in \bg{p}{}$ such that
\begin{enumerate}
\item[(i)] $a^*_\mu \equiv \varepsilon_0\ (\textup{mod}\ 2)$;
\item[(ii)] $r^*_\mu-\varepsilon^*_\mu \equiv m\ (\textup{mod}\ p)$ and 
\item[(iii)] $\mu^{(1)*}=\tilde{\mu}$.
\end{enumerate}

The condition $\mu^{(1)*}=\tilde{\mu}$ implies that
\[
\bgs{p}{}(\mu)=
	\begin{pmatrix}
	a^*_\mu &       \\
		    &\bgs{p}{}(\tilde{\mu})\\
	r^*_\mu &       
	\end{pmatrix} =
	\begin{pmatrix}
	a^*_\mu &       \\
		    &\tilde{S}\\
	r^*_\mu &       
	\end{pmatrix} = 
	\begin{pmatrix}
a^*_\mu & a_1 & \cdots & a_l \\
r^*_\mu & r_1 & \cdots & r_l
\end{pmatrix}.
\]

Let us see that in fact $\bgs{p}{}(\mu)=S$, that is, $a^*_\mu=a_0$ and $r^*_\mu=r_0$. Indeed, from $(i)$, $a^*_\mu$ is even if and only if $\varepsilon_0=0$, if and only if $p \mid a_\lambda$. But $a^*_\mu$ is even if and only if $p \mid a^*_\mu$, by Corollary \ref{corollary:epsilon}. This sequence of equivalences says that $\varepsilon_0=\varepsilon^*_\mu$.
Then, by (ii) we have that $r^*_\mu \equiv r_0\ (\textup{mod}\ p)$.

 Since $S \in \mathcal{M}_p$, then, from Proposition \ref{cinco}, we have, in particular
\begin{equation}
\label{eq:1}
\varepsilon_0 \leq r_0 - r_1 < p+\varepsilon_0.
\end{equation}
On the other hand, since $\mu \in \bg{p}{}$, then $\bgs{p}{}(\mu) \in \mathcal{M}_p$, by Proposition \ref{prop:imagesymbol}, so that, in particular we have
\begin{equation}
\label{eq:2}
\varepsilon^*_\mu \leq r^*_\mu - r_1 < p+\varepsilon^*_\mu.
\end{equation}

Substracting Equation \ref{eq:1} from Equation \ref{eq:2}, we get
\[
-p < r^*_\mu - r_0 < p,
\]
but since $p \mid r^*_\mu - r_0$ we can conclude that $r^*_\mu - r_0=0$, so that $r^*_\mu = r_0$. Therefore $a^*_\mu=2r^*_\mu - \varepsilon^*_\mu=2r_0-\varepsilon_0=a_0$, and
\[
\bgs{p}{}(\mu)=
\begin{pmatrix}
a_0 & a_1 & \cdots & a_l \\
r_0 & r_1 & \cdots & r_l
\end{pmatrix}=S.
\]
\end{proof}

\appendix
\section{}
\label{appendix}

\noindent Let $n \in \mathbb{N}$ and $p$ an odd prime. Consider the two following sets of partitions of $n$
\[
\bg{p}{n}=\{\lambda \mid \lambda \vdash n; \, \lambda=\lambda'\ \text{and}\ \lambda\ \text{has no diagonal $(p)$-hooks}\},\ \text{and}
\]
\[
M_p^n=\{\lambda \mid \lambda \in \Reg{p}{n}\ \text{and}\ \ m(\lambda)=\lambda\},
\]
where $m$ is the Mullineux map.

\begin{example}\label{exbijection}
Let $n=18$ and $p=3$. There are, in total, $385$ partitions of $18$. The partitions $\lambda$ of $18$ such that $\lambda=\lambda'$ are
\[
( 5, 4^3, 1 ),\,( 6, 5, 2^3, 1 ),\,( 7, 4, 2^2, 1^3 ),\,
( 8, 3, 2, 1^5),\,( 9, 2, 1^7)
\]
Among them, those with no diagonal $(3)$-hooks are 
\[
\bg{3}{18}=\{(6,5,2^3,1),\, (7,4,2^2,1^3),\, (9,2,1^7)\}
\]
There are $135$ $3$-regular partitions of $18$. Those which are fixpoints of the Mullineux map are
\[
M_3^{18}=\{ (7, 5, 2^2, 1^2 ),\, ( 9, 4^2, 1 ),\, ( 10, 4^2 ) \}
\]
\end{example}

In general the sets $M^n_p$ of partitions of $n$ fixed by the Mullineux map and $\bg{p}{n}$ of BG-partitions of $n$, have the same number of elements. Indeed, it is easy to see that the number of self-Mullineux partitions is equal to the number of \emph{$p$-regular conjugacy classes} of the symetric group $\Sn$, contained in $A_n$, which split into two different conjugacy classes of $A_n$. Here, a $p$-regular conjugacy class means that the order of its elements is not divisible by $p$. This follows by \cite[Proposition 2]{andrewsolsson}. Now, the number of $p$-regular splitting conjugacy classes of $\Sn$ is equal to the number of self-conjugate partitions with diagonal hook-lengths not divisible by $p$. This is straightforward by \cite[1.2.10]{jameskerber} and the standard bijection between partitions in odd distinct parts and self-conjugate partitions (explained in detail later in this appendix).

This appendix contains an alternative proof of the fact that $\# M^n_p = \#\bg{p}{n}$.

\begin{remark} The generating function for the cardinality of $\bg{p}{n}$ is 
\[
\prod_{\substack{i\,\geq 0 \\ p\, \nmid\, 2i+1}}(1+t^{2i+1}).
\]
\end{remark}

For proving the mentioned identity, we need make some remarks about the intersection of conjugacy classes in $\Sn$ with $A_n$ and about Brauer characters of $A_n$.
\medskip

\noindent \textbf{Splitting of conjugacy classes of $\Sn$.} \ \ Let $C$ be a conjugacy class of $\Sn$ of even permutations. That is $C \subseteq A_n$. Then one of the two following possibilities holds:
\begin{itemize}
\item $C$ is a conjugacy class in $A_n$, or
\item $C$ splits into two conjugacy classes in $A_n$.

\end{itemize}

In the second possibility, say $C=C_1 \sqcup C_2$, these two conjugacy classes have the same size. Moreover, conjugating such classes by any element of $\Sn \setminus A_n$ permutes them, that is, if $\sigma \in \Sn \setminus A_n$, then $\sigma C_1 \sigma^{-1}=C_2$ and $\sigma C_2 \sigma^{-1}=C_1$.
Furthermore, the conjugacy class $C$ splits if and only if the cycle type of elements in $C$ consists of different odd integers (\cite[1.2.10]{jameskerber}). We call a conjugacy class \emph{$p$-regular} when the order of its elements is not divisible by $p$.

The set of $p$-regular conjugacy classes of $\Sn$ contained in $A_n$ is then formed by two types of conjugacy classes:
\[
A \sqcup B,
\]
where $A$ is set of $p$-regular conjugacy classes of $\Sn$ of even permutations which are also conjugacy classes in $A_n$ and $B$ is the set conjugacy classes of $\Sn$ which split into two conjugacy classes in $A_n$. Hence, the set of $p$-regular conjugacy classes of $A_n$ is 
\[
A \sqcup \overline{B},
\]
where $\overline{B}$ consists of conjugacy classes coming for restriction of those conjugacy classes in $B$. These conjugacy classes in $\overline{B}$ come by pairs, in the sense that if $\sigma \in \Sn \setminus A_n$ and $\overline{C} \in \overline{B}$, then $\sigma \overline{C} \sigma^{-1} \in \overline{B}$ and $C=\overline{C}\, \cup\, \sigma \overline{C} \sigma^{-1}$ is a conjugacy class of $\Sn$ in $B$. Hence, a basis for the space of $\mathbb{C}$-valued functions defined on $p$-regular elements of $A_n$ and constant on conjugacy classes is
\[
\{ \mathbbm{1}_C \mid C \in A \}\  \sqcup\  \{ \mathbbm{1}_{\overline{C}} \mid \overline{C} \in \overline{B} \},
\]
or
\[
\{ \mathbbm{1}_C \mid C \in A \}\  \sqcup\  \{ \mathbbm{1}_{\overline{C}} , \mathbbm{1}_{\sigma \overline{C} \sigma^{-1}} \mid \overline{C} \cup \sigma \overline{C} \sigma^{-1} = C \in B \}.
\]
We claim that the set $\bg{p}{n}$ is in bijection with the set of $p$-regular conjugacy classes of $S_n$, contained in $A_n$ which split in two different conjugacy classes of $A_n$. Notice that the set of conjugacy classes of $\Sn$ with cycle type consisting of different odd integers is in bijection with self-conjugate partitions of size $n$. Indeed, a conjugacy class whose cycle type consists of different odd integers is associated to a unique partition $\lambda=(\lambda_1,\ldots,\lambda_r)$ of $n$ with $\lambda_1,\ldots,\lambda_r$ different odd integers (the lengths of the cycles in the cycle decomposition, in decreasing order). Consider the self-conjugate partition $\mu$ defined by the lengths of its diagonal hooks as follows: $h^\mu_{11}=\lambda_1,h^\mu_{22}=\lambda_2,\ldots,h^\mu_{rr}=\lambda_r$. The condition of $\lambda_i$'s being different, and then strictly decreasing, ensures that $\mu$ is a well defined partition. Conversely, any self-conjugate partition of $n$ corresponds to a unique finite sequence of different odd integers; the lengths of its hooks (\cite[2.5.11]{jameskerber}). 

Conjugacy classes of $\Sn$ with cycle type consisting of different odd integers are in particular contained in $A_n$. If we consider those conjugacy classes with the additional condition of being $p$-regular, which form in fact the set $B$, they are therefore in bijection with self-conjugate partitions such that $p$ does not divide the length of any diagonal hook, that is, the set $\bg{p}{n}$. Hence $B$ is in bijection with $\bg{p}{n}$.
\medskip

\noindent \textbf{Brauer characters of $A_n$.} \ \  Let $D$ be an irreducible $F A_n$-module. To $D$ we can associate a function $\chi_D$ which is called the \emph{(irreducible) Brauer character} of $A_n$ afforded by $D$. This function $\chi_D$ is a complexed-valued function defined on the set of $p$-regular elements of $A_n$ and it is constant on conjugacy classes. Furthermore, isomorphic $F A_n$-modules are associated to equal Brauer characters. See \cite[\textsection 15]{isaacs} for the precise definition of Brauer character and for further information.

In particular \cite[Theorem 15.10]{isaacs} says that the set of irreducible Brauer characters of $A_n$ form a basis of the space of $\mathbb{C}$-valued functions defined on $p$-regular elements of $A_n$ and constant on conjugacy classes. This implies that there are as many irreducible Brauer characters of $A_n$ as $p$-regular conjugacy classes of $A_n$, and by \cite[Corollary 15.11]{isaacs}, this is also the number of isomorphism classes of $F A_n$-modules. Therefore to each element $\mu$ of the set
\[
\{ \lambda \mid \lambda \vdash n,\ \text{$\lambda$ $p$-regular and}\   \lambda \neq m(\lambda) \}\  \sqcup \ \{\lambda^+,\,\lambda^- \mid \lambda \vdash n,\ \text{$\lambda$ $p$-regular and}\   \lambda = m(\lambda)\},
\]

which parametrizes irreducible $F A_n$-modules (see \ref{eqn:indxmodA} in the introduction), we can associate an irreducible Brauer character $\chi_{[\mu]}$. That way, a basis of the space of $\mathbb{C}$-valued functions defined on $p$-regular elements of $A_n$ and constant on conjugacy classes is 
\[
\{ \chi_{[\lambda]} \mid \lambda \in \Reg{p}{n} \ \ \text{and} \ \ \lambda \neq m(\lambda)\}\  \sqcup\  \{\chi_{[\lambda^+]},\chi_{[\lambda^-]} \mid \lambda \in \Reg{p}{n} \ \ \text{and}\ \ \lambda=m(\lambda)\},
\]
considering only one partition $\lambda$ for each couple $\{\lambda,m(\lambda)\}$ with $\lambda \neq m(\lambda)$.

\begin{proposition}\label{bijection}
The sets $M^n_p$ and $\bg{p}{n}$ have the same number of elements.
\end{proposition}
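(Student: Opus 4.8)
The plan is to count the number of irreducible $FA_n$-modules in two different ways and to combine this with one trace computation. Throughout, write $R=\#\Reg{p}{n}$ and $M=\#M_p^n$, and recall from the discussion above that the $p$-regular conjugacy classes of $\Sn$ contained in $A_n$ split as $A\sqcup B$ (non-splitting, respectively splitting, in $A_n$), that the bijection established in this appendix gives $\#B=\#\bg{p}{n}$, and that the $p$-regular classes of $A_n$ are $A\sqcup\overline{B}$ with $\#\overline{B}=2\#B$. Let $O$ denote the number of $p$-regular conjugacy classes of $\Sn$ \emph{not} contained in $A_n$ (the classes of odd permutations), so that $R=(\#A+\#B)+O$.

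First I would count the irreducible $FA_n$-modules as the number of $p$-regular classes of $A_n$, namely $\#A+2\#B$. On the other hand, the Clifford-theoretic parametrization recalled above (one Brauer character per pair $\{\lambda,m(\lambda)\}$ with $\lambda\neq m(\lambda)$, and two characters $\chi_{[\lambda^+]},\chi_{[\lambda^-]}$ for each self-Mullineux $\lambda$) gives the same count as $\tfrac{R-M}{2}+2M$. Equating the two expressions yields the identity
\[
\#A+2\#B=\frac{R-M}{2}+2M.
\]

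The remaining ingredient is an expression for $M$ in terms of conjugacy classes, which I would obtain by computing the trace of the involution ``tensoring with the sign representation $\varepsilon$'' on the space of $\mathbb{C}$-valued functions on the $p$-regular elements of $\Sn$ that are constant on classes. In the basis of irreducible Brauer characters this involution sends $\chi_{D^\lambda}\mapsto\chi_{D^\lambda\otimes\varepsilon}=\chi_{D^{m(\lambda)}}$ by the Mullineux theorem invoked in the introduction, so it is the permutation matrix of $m$ and its trace equals the number $M$ of fixed points. In the basis of indicator functions $\mathbbm{1}_C$ indexed by $p$-regular classes, the same involution is diagonal with eigenvalue $\varepsilon(C)=\pm1$ (constant on each class), so its trace equals the number of even $p$-regular classes minus the number of odd ones, that is $(\#A+\#B)-O$. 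Comparing, $M=(\#A+\#B)-O$.

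Finally I would solve the resulting linear system. From $R=(\#A+\#B)+O$ and $M=(\#A+\#B)-O$ one gets $R-M=2O$, and substituting these relations into the displayed identity collapses it to $O=\#A$; hence $M=(\#A+\#B)-O=\#B=\#\bg{p}{n}$, which is exactly the claim. The step requiring the most care is the trace identity, namely that tensoring with $\varepsilon$ realizes the Mullineux involution at the level of Brauer characters (this is the content of the Mullineux conjecture, now a theorem) and simultaneously acts by the scalar $\varepsilon(C)$ on each class indicator; once this is in place, everything else is bookkeeping with the two bases of class functions already constructed in this appendix.
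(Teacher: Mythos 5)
Your argument is correct, and its decisive second step takes a genuinely different route from the paper's. Both proofs begin with the same first identity: comparing the indicator-function basis and the Brauer-character basis of the space of class functions on $p$-regular elements of $A_n$ gives $\#A+2\#B=\#D+2\#M_p^n$, where $\#D=\tfrac{R-M}{2}$ is the number of pairs $\{\lambda,m(\lambda)\}$ with $\lambda\neq m(\lambda)$; your ``count the irreducible $FA_n$-modules in two ways'' is exactly this dimension count. The divergence is in how the second relation is obtained. The paper stays inside $A_n$: it fixes $\sigma\in\Sn\setminus A_n$, forms the subspace $E^\sigma$ of class functions fixed under conjugation by $\sigma$, and compares its two bases, which requires knowing that conjugation by $\sigma$ permutes the split modules $D^\lambda_+$ and $D^\lambda_-$. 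You instead work on $\Sn$ and compute the trace of the involution $f\mapsto f\cdot\varepsilon$ on class functions of $p$-regular elements of $\Sn$: it is diagonal with entries $\pm1$ in the indicator basis and is the permutation matrix of $m$ in the Brauer-character basis, the latter being precisely where the Ford--Kleshchev theorem $D^\lambda\otimes\varepsilon\cong D^{m(\lambda)}$ enters directly. The resulting identity $M=(\#A+\#B)-O$ is in fact equivalent to the paper's $\#A+\#B=\#D+M$ once one uses $R=(\#A+\#B)+O$, so the two proofs solve the same linear system; but the inputs differ. The paper's second step needs the Clifford-theoretic behaviour of $D^\lambda_\pm$ under outer conjugation and never invokes the Mullineux theorem beyond the parametrization of irreducibles, whereas yours needs the Mullineux theorem at the level of Brauer characters of $\Sn$ and nothing about $A_n$ beyond the count of its irreducible modules. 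Your version is arguably cleaner in that the split modules appear only in the initial count; the paper's is more self-contained within the Clifford-theoretic framework it has already set up in the appendix.
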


\begin{proof}
To prove this, we will give two bases of a same space of functions, and the equality of the cardinality of these bases will give the result.

Denote by $E$ the space of $\mathbb{C}$-valued functions defined on $p$-regular elements of $A_n$ and constant on conjugacy classes
\[
E=\left\{f:\{p\text{-regular elements of}\ A_n\}\longrightarrow\mathbb{C} \mid f\ \text{is a class function of } A_n \right\}.
\]

Define an action of $\Sn$ on $E$ by conjugation as follows: for $\sigma \in \Sn$ and $f \in E$, $f^\sigma$ is the class function
\[
f^\sigma(\tau):=f(\sigma \tau \sigma^{-1}).
\]
For $\sigma \in \Sn \setminus A_n$, let $E^\sigma$ be the set of class functions fixed by conjugation by $\sigma$:
\[
E^\sigma=\{f\in E \mid f^\sigma=f \}.
\]
This is a subspace of $E$. From the above discussion about splitting of conjugacy classes and how conjugation permutes some conjugacy classes, a basis for $E^\sigma$ is 
 
\[
\{ \mathbbm{1}_C \mid C \in A \}\  \sqcup\  \{ \mathbbm{1}_{\overline{C}} + \mathbbm{1}_{\sigma \overline{C} \sigma^{-1}} \mid \overline{C} \cup \sigma \overline{C} \sigma^{-1} = C \in B \}.
\]
We claim that a basis for $E^\sigma$ is 
\[
\{ \chi_{[\lambda]} \mid \lambda \in \Reg{p}{n} \ \ \text{and} \ \ \lambda \neq m(\lambda)\}\  \sqcup\  \{\chi_{[\lambda^+]}+\chi_{[\lambda^-]} \mid \lambda \in \Reg{p}{n} \ \ \text{and}\ \ \lambda=m(\lambda)\}.
\]
Indeed, this comes from the fact that, as with usual characters of representations in characteristic zero, conjugation of the character of a representation is the character of conjugation of the representation, here with Brauer characters. And also from the fact that conjugation by $\sigma$ permutes the modules associated to $\lambda^+ $ and $\lambda^-$ above.

Now, we have two bases for $E$ and two bases for $E^\sigma$. On one hand, from the characteristic function basis, the dimension of $E$ is
\[
\# A + \#\overline{B}= \# A + 2(\# B),
\]
and from the Brauer character basis, the dimension of $E$ is
\[
\#\{ \{\lambda,m(\lambda)\}\mid \lambda \in \Reg{p}{n} \ \ \text{and} \ \ \lambda \neq m(\lambda)\}\  + \  \#\{\lambda^+,\lambda^- \mid \lambda \in \Reg{p}{n} \ \ \text{and}\ \ \lambda=m(\lambda)\}.
\]
That is
\[\#D+2(\#M_p^n),\]
where $D=\{\lambda,m(\lambda)\}\mid \lambda \in \Reg{p}{n} \ \ \text{and} \ \ \lambda \neq m(\lambda)\}$. Hence,
\[
\# A + 2(\# B)=\#D+2(\# M_p^n).
\]

Counting the elements on the two bases for $E^\sigma$, we obtain that the dimension of $E^\sigma$ is
\[
\# A + \# B=\#D+\# M_p^n.
\]
These two identities imply that $\# B=\# M_p^n$. Since $B$ is in bijection with $\bg{p}{n}$ we obtain the result.

\end{proof}

\bibliographystyle{alpha}
\bibliography{bib}

\end{document}